\newtheorem{theorem}{Theorem}[section]
\newtheorem{lemma}[theorem]{Lemma}
\newtheorem{corollary}[theorem]{Corollary}
\theoremstyle{definition}
\newtheorem*{remark}{Remark}
\newcommand\smallO{
  \mathchoice
    {{\scriptstyle\mathcal{O}}}
    {{\scriptstyle\mathcal{O}}}
    {{\scriptscriptstyle\mathcal{O}}}
    {\scalebox{.7}{$\scriptscriptstyle\mathcal{O}$}}
  }
\title{Synchronisation of running sums of automatic sequences}
\author{Rob Burns}
\begin{document}
\maketitle
\begin{abstract}
We investigate the running sums of some well-known automatic sequences to determine whether they are synchronised.
\end{abstract}

\section{Introduction}
\label{intro}

Synchronised sequences were introduced by Carpi and Maggi  \cite{Carpi_2001}  in 2001.These sequences, or functions over $\mathbb{N}$, lie between automatic and regular sequences. Roughly speaking, a function $f$ is called synchronised if there is an automaton which takes as input a pair of integers $(n, s)$, written in some number system(s), and accepts the pair if and only if $f(n) = s$. If both $n$ and $s$ are written in base $k$, then the function is called $k$-synchronised. If $n$ is written in base $k$ and $s$ is written in base-$l$, then the function is called $(k, l)$-synchronised. If $n$ and $s$ are written in Fibonacci representation, then the function is called Fibonacci-synchronised.

The running sum of a sequence of positive integers $a = ( a_i )_{i=0}^{\infty}$ is defined to be 
$$
f_a (n) := \sum_{i=0}^{n} a_i .
$$

Unless the sequence $a$ is trivial, $f_a$ is unbounded and therefore not automatic. On the other hand, if $a$ is automatic, then $f_a$ is regular.  

This paper examines some standard automatic sequences to determine whether their running sums are synchronised. In his book \textit{The Logical Approach to Automatic Sequences}\cite{Shallit_2022}, Jeffrey Shallit provides a Fibonacci-synchronising automaton for the running sum of the Fibonacci word (section 10.11) and a $2$-synchronising automaton for the running sum of the ternary Thue-Morse sequence (section 10.15).

\bigskip

\section{Preliminaries}
\label{bg}

The Fibonacci numbers $0, 1, 1, 2, 3, 5, 8, \dots$ are defined by the recursive relations
$$
F_0 = 0, \,\,\, F_1 = 1 \,\,\, \text{ and } \,\,\, F_{n} = F_{n-1} + F_{n-2} \text{ for } \,\,\, n \geq 2.
$$
The golden ratio is defined by $\phi := (1 + \sqrt{5} )/2$. The conjugate of $\phi$ is defined by $\psi := (1 - \sqrt{5} )/2 = -1/\phi$. $\phi$ and $\psi$ are roots of the polynomial $x^2 - x - 1$. The Fibonacci numbers satisfy Binet's formula:
\begin{equation}
\label{binet}
F_n = (\phi^n - \psi^n )/\sqrt{5}.
\end{equation}

We will sometimes use regular expressions to represent integers. The regular expression for an integer depends on the base in which it is being represented. For example, in base $2$, non-zero even numbers are represented by the regular expression
$$
0^* 1 (1 | 0)^* 0.
$$
Note that the regular expression above has been padded with $0$'s at the most significant digit end of its representation. This will generally be the case. We will use exponentiation when the regular expression contains a repeated word. For example, in base $3$, the base $10$ integer $30$ is represented by the expression $(1 0)^2 = 1 0 1 0$. Sequences defined by regular expressions can also be defined by an automaton and vice versa.

There is a connection between an extension of Presburger arithmetic and automatic sequences as described in theorem 6.6.1 of  \cite{Shallit_2022}. As a result, two or more automatic sequences (or synchronised sequences) can be combined using operations from first-order logic to create new automata (or synchronised sequences). If $f$ is a synchronised function and $s$ is an automatic sequence, then we can create another synchronised function $f^{\prime}$ by
$$
f^{\prime}(n)  =
\begin{cases}
    f(n), & \text{if } \, \, n \in s \\
    0, & \text{if } \, \, n \not\in s.
\end{cases}
$$

We will use this fact in section \ref{unsync}.

The automata in this paper will read integers starting with the most significant digit (msd).

We have used Shallit's book \cite{Shallit_2022} as the source for information about automatic sequences and the software package {\tt Walnut}. We used  {\tt Walnut} to handle automata. Hamoon Mousavi, who wrote the program, has provided an introductory article \cite{Mousavi:2016aa}. Further resources and a  list of papers that have used {\tt Walnut} can be found at \href{https://www.cs.uwaterloo.ca/~shallit/papers.html}{Jeffrey Shallit's page}.

The free open-source mathematics software system SageMath \cite{sagemath} was used to perform the matrix algebra and polynomial factorisation.

We will need the following theorem from \cite{Shallit_2021}, which establishes some closure properties of synchronised sequences. Here, $\lfloor . \rfloor$ is the floor function.

\bigskip

\begin{theorem} 
\label{shallit}
Suppose $(a(n))_{n \geq 0}$ and $(b(n))_{n \geq 0}$ are $(k, l)$-synchronised sequences. Then so are the sequences
\begin{enumerate}[label=(\alph*)]
\item  $(a(n)+ b(n))_{n \geq 0}$;  
\item  $(a(n) \dotdiv b(n))_{n \geq 0}$, \text{ where } $x \dotdiv y$ \text{ is the "monus" function, defined by} $\max(x-y, 0)$; 
\item  $( \lfloor a(n) - b(n) \rfloor )_{n \geq 0} $;
\item $ (\lfloor \alpha a(n) \rfloor )_{n \geq 0}$, \text{ where } $\alpha$ \text {is a non-negative rational number}.
\end{enumerate}
\end{theorem}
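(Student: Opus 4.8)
\emph{Proof plan.}\quad The plan is to prove all four parts at once by writing, in each case, the graph of the new sequence as a first-order formula built from the graphs of $a$ and $b$, and then appealing to the logical framework recalled in Section~\ref{bg}: by the B\"uchi--Bruy\`ere correspondence underlying Theorem~6.6.1 of \cite{Shallit_2022}, the class of $(k,l)$-synchronised relations is closed under the Boolean connectives and under (bounded) existential and universal quantification, and it already contains the graphs of addition, of the order relation, and of multiplication by any fixed nonnegative integer. Write $R_a(n,s)$ for the synchronised relation expressing $a(n)=s$, and $R_b(n,s)$ likewise; it then suffices to exhibit, for each candidate sequence $c$, a formula equivalent to $c(n)=s$ in this calculus.

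For part (a) I would use
\[
  c(n)=s \ \Longleftrightarrow\ \exists u\,\exists v\ \bigl(R_a(n,u)\wedge R_b(n,v)\wedge u+v=s\bigr),
\]
and for part (b), splitting on the sign of $a(n)-b(n)$,
\[
  c(n)=s \ \Longleftrightarrow\ \exists u\,\exists v\ \Bigl(R_a(n,u)\wedge R_b(n,v)\wedge\bigl((v\le u\wedge u=v+s)\vee(u<v\wedge s=0)\bigr)\Bigr),
\]
which is exactly $s=a(n)\dotdiv b(n)$. Part (c) is the same with $\dotdiv$ replaced by ordinary subtraction together with the synchronised graph of the floor map, handled just as in part (d); for part (d), writing $\alpha=p/q$ with integers $p\ge 0$, $q\ge 1$ and using $s=\lfloor\alpha t\rfloor\iff qs\le pt<qs+q$, I would take
\[
  c(n)=s \ \Longleftrightarrow\ \exists t\ \bigl(R_a(n,t)\wedge qs\le pt\wedge pt<qs+q\bigr),
\]
where $pt$, $qs$ and $qs+q$ are formed from the available multiplication-by-a-constant and addition relations.

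The remaining work is routine bookkeeping. Totality and functionality of each constructed relation are inherited from $a$ and $b$, so each formula really does define a sequence. The only place the extra structure of \emph{synchronised} (rather than merely automatic) relations matters is that the quantifiers above must be genuinely bounded: since synchronised functions grow at most linearly, the witnesses $u$, $v$, $t$ are all $\BigO{n}$, the projections stay inside the synchronised class, and the new sequences inherit the bound $c(n)=\BigO{n}$. I expect the main obstacle to be nothing deeper than matching conventions --- ensuring that the msd reading order and the leading-zero padding of the two coordinates are set up so that the closure results of \cite{Shallit_2022} apply verbatim; all the genuine content is carried by those results.
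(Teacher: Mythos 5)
The paper does not actually prove this statement: it is quoted verbatim from \cite{Shallit_2021} (``We will need the following theorem from \cite{Shallit_2021}\dots''), so there is no in-paper proof to match yours against. That said, your argument is essentially the standard one from that source --- express the graph of the new sequence as a first-order formula over the synchronised graphs $R_a$, $R_b$ together with base-$l$ addition and order, and invoke closure of automatic relations under Boolean operations and quantification --- and your formulas for (a), (b) and (d) are the right ones. Two points deserve attention. First, your treatment of (c) is too thin: you say it is ``the same with $\dotdiv$ replaced by ordinary subtraction together with the synchronised graph of the floor map,'' but $a(n)-b(n)$ is already an integer (possibly negative), so there is no floor map to apply; you need to say explicitly what sequence you are defining (e.g.\ treat it as $|a(n)-b(n)|$ via $\exists u\,\exists v\,(R_a(n,u)\wedge R_b(n,v)\wedge((u\ge v\wedge u=v+s)\vee(v>u\wedge v=u+s)))$, or restrict to the case $a\ge b$), rather than deferring to part (d), which concerns multiplication by a rational constant and is a different construction. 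Second, your closing claim that ``the quantifiers above must be genuinely bounded'' for the construction to stay in the synchronised class is a misconception: projection of an automatic relation is automatic with no boundedness hypothesis, so the existential quantifiers over $u,v,t$ are unproblematic as they stand; the linear growth bound of Theorem~\ref{shallit2} is a \emph{consequence} of being synchronised, not an ingredient of the closure argument. Neither point breaks the proof, but both should be tightened.
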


\bigskip

We will also need the following growth bounds on synchronised sequences. See \cite{Shallit_2022}, theorem 10.6.1.

\bigskip

\begin{theorem}
\label{shallit2}
Let $k, l \geq 2$ be integers. Let $(f(n))_{n \geq 0}$ be a $(k, l)$-synchronised sequence, and define $\beta = (\log l)/(\log k)$. Then
\begin{enumerate}[label=(\alph*)]
\item  $f(n) = \mathcal{O} (n^{\beta})$;
\item  if  $f(n) = \smallO (n^{\beta})$, then $f(n) =  \mathcal{O} (1)$.
\end{enumerate}
\end{theorem}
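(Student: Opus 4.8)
The plan is to fix a DFA $M$, with $t$ states, that accepts a pair $(n,s)$ — with $n$ written in base $k$, $s$ written in base $l$, both read most-significant-digit first and the shorter numeral left-padded with zeros to a common length — exactly when $f(n)=s$. Everything will rest on one observation about the \emph{leading block of padding}. Write $d$ for the number of base-$k$ digits of $n$ and $e$ for the number of base-$l$ digits of $s$. If $e>d$, the first $e-d$ input symbols of $(n,s)$ all have first component $0$; if $d>e$, the first $d-e$ input symbols all have second component $0$. A loop of $M$ lying wholly inside such a block, when inserted or deleted, alters only the padding of one numeral and hence changes only \emph{one} of the two values $n,s$. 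Since $f$ is single-valued, a loop inside the ``$e>d$'' block is impossible — it would give two values of $s$ for one $n$ — whereas a loop inside the ``$d>e$'' block merely moves $n$ while keeping $s=f(n)$ fixed.

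For part (a) I would prove $e\le d+t$ for every $n$. If instead $e-d>t$, two of the $e-d$ states that $M$ occupies after reading $1,2,\dots,e-d$ symbols must coincide; this gives a loop of $M$ that reads only padding symbols and does not use the first transition, and deleting it produces an accepted pair $(n,s')$ in which $s'$ has strictly fewer base-$l$ digits than $s$, so $s'<s$, contradicting $f(n)=s$. With $e\le d+t$ established, the routine chain $f(n)=s<l^{d+t}\le l^{t+1}l^{d-1}\le l^{t+1}(k^{d-1})^{\beta}\le l^{t+1}n^{\beta}$ — using $k^{\beta}=l$ and $n\ge k^{d-1}$ — yields $f(n)=\mathcal{O}(n^{\beta})$.

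For part (b) I would argue the contrapositive: if $f$ is not $\mathcal{O}(1)$ then it is unbounded, and I will exhibit infinitely many $n$ with $f(n)\ge c\,n^{\beta}$ for a fixed $c>0$, which contradicts $f(n)=\smallO(n^{\beta})$. For each value $s$ in the range of $f$ (which is now infinite) put $n_0(s)=\min\{n:f(n)=s\}$; distinct $s$ give distinct $n_0(s)$, so the integers $n_0(s)$ form an infinite, hence unbounded, set. With $d$ the number of base-$k$ digits of $n_0(s)$ and $e$ the number of base-$l$ digits of $s$, I claim $d\le e+t$: if $d-e>t$, two of the $d-e$ states $M$ occupies after reading $1,\dots,d-e$ symbols coincide (all these symbols have second component $0$), producing a loop that omits the first transition, and deleting it gives an accepted pair $(n',s)$ with $n'<n_0(s)$, contradicting the minimality of $n_0(s)$. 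From $d\le e+t$ and $s\ge l^{e-1}$ one gets $f(n_0(s))=s\ge l^{-(t+1)}l^{d}=l^{-(t+1)}(k^{d})^{\beta}>l^{-(t+1)}n_0(s)^{\beta}$, and since the $n_0(s)$ are unbounded this contradicts $\smallO(n^{\beta})$.

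The arithmetic turning digit-count inequalities into the stated $\mathcal{O}$ and $\smallO$ bounds is harmless, and the structural idea — pump the padding loop, then invoke single-valuedness of $f$ for part (a) or minimality of $n_0(s)$ for part (b) — is short. The point that genuinely needs care, and the main obstacle, is keeping every pumped pair a \emph{canonical} joint representation: correctly zero-padded, with no spurious leading zero, so that acceptance by $M$ really does assert an equality $f(\cdot)=\cdot$. This is exactly why the pigeonhole should be applied to the states reached after reading $1,2,\dots$ symbols of the padding block, rather than $0,1,\dots$: two coinciding states then give a loop that omits the first transition — the one that reads the most significant digit of the longer of the two numerals — so deleting the loop cannot create a leading zero there, and the shorter numeral only loses interior padding zeros.
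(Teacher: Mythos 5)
Your argument is correct: both digit-count bounds ($e\le d+t$ via single-valuedness, $d\le e+t$ via minimality of $n_0(s)$) follow from the padding-block pumping exactly as you describe, and the edge cases ($n=0$, $s=0$) are vacuous. The paper does not prove this theorem --- it quotes it as Theorem 10.6.1 of \cite{Shallit_2022} --- and your proof is essentially the standard pumping argument used there, so there is nothing further to reconcile.
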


\bigskip

\begin{remark}
The bounds of theorem \ref{shallit2} also hold for Fibonacci-synchronised sequences (with $\beta = 1$) (See the comment on p. 284 of  \cite{Shallit_2022}).
\end{remark}

\bigskip

\begin{remark}
An example of how theorem \ref{shallit2} can be used to show a sequence is not synchronised is provided by the characteristic sequence of the powers of $2$. The running sum of this sequence is unbounded but has growth rate $\mathcal{O} (\log(n)) = \smallO (n)$ and so cannot be $2$-synchronised.
\end{remark}
\bigskip

\section{The running sum and the index function}
\label{relation}

Let $s$ be an automatic binary sequence. Define  $\bm{\mathrm{sum_s}}$ to be the running sum of $s$. Define  $\bm{\mathrm{ind_s}}$ to be the sequence of indices at which the binary digit $1$ appears in $s$. For example, for the Thue-Morse sequence $\bm{\mbox{T}} = (0, 1, 1, 0, 1, 0, 0, 1, 1, 0, 0, 1, 0. \dots)$, we have
$$
 \bm{\mathrm{sum_T}} = (0, 1, 2, 2, 3, 3, 3, 4, 5, 5, \dots) \,\,\, \text{    and   } \,\,\, \bm{\mathrm{ind_T}} = (1, 2, 4, 7, 8, 11, 13, 14, 16, \dots).
$$

\bigskip

\begin{theorem}
If $s$ is an automatic binary sequence, then the sequence  $\bm{\mathrm{sum_s}}$ is synchronised if and only if the sequence $\bm{\mathrm{ind_s}}$ is synchronised.
\end{theorem}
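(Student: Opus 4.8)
The plan is to use the elementary fact that $\bm{\mathrm{sum_s}}$ and $\bm{\mathrm{ind_s}}$ are, up to a shift by $1$, generalised inverses of one another, so that the graph of each is first-order definable from the graph of the other together with addition and order. Spelling this out, for all $n,m \geq 0$ one has $\bm{\mathrm{sum_s}}(n) = m$ precisely when either $m = 0$ and $\bm{\mathrm{ind_s}}(0) > n$, or $m \geq 1$ and $\bm{\mathrm{ind_s}}(m-1) \leq n < \bm{\mathrm{ind_s}}(m)$; and conversely $\bm{\mathrm{ind_s}}(j) = n$ precisely when $\bm{\mathrm{sum_s}}(n) = j + 1$ and, in addition, $n = 0$ or $\bm{\mathrm{sum_s}}(n-1) = j$. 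Each of these is a formula of first-order logic in the language of $(\mathbb{N}, +, \leq)$ enlarged by a single function symbol for the other sequence — the predecessor ``$n-1$'' being introduced in the usual guarded way, or via the monus of theorem \ref{shallit}(b) — and the strict monotonicity of $\bm{\mathrm{ind_s}}$, which is immediate from its definition, is the only auxiliary fact used and need not itself be expressed.

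Before running the argument I would dispose of the degenerate case in which $s$ contains only finitely many $1$'s: then $\bm{\mathrm{ind_s}}$ is merely a finite word, so the assertion about it is vacuous, while $\bm{\mathrm{sum_s}}$ is eventually constant and hence automatic and synchronised. So from now on assume $s$ has infinitely many $1$'s; this makes $\bm{\mathrm{ind_s}} \colon \mathbb{N} \to \mathbb{N}$ a genuine strictly increasing sequence and renders both displayed equivalences total.

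The main step is then nothing more than the closure of synchronised sequences under first-order operations — the correspondence between the extension of Presburger arithmetic and finite automata recorded as theorem 6.6.1 of \cite{Shallit_2022} and relied on elsewhere in this paper. If $\bm{\mathrm{ind_s}}$ is synchronised, then its graph is recognised by a finite automaton, and the first equivalence above exhibits the graph of $\bm{\mathrm{sum_s}}$ as a first-order combination of recognisable relations (the relation $j \mapsto j+1$ being trivially synchronised); hence $\bm{\mathrm{sum_s}}$ is synchronised. The converse is symmetric, via the second equivalence. It is worth noting that the hypothesis that $s$ is automatic, although natural, is not really needed as an input, since it is forced by either side: $s(n) = 1 \iff \exists j \, \bm{\mathrm{ind_s}}(j) = n$, and $s$ is likewise recovered as the sequence of first differences of $\bm{\mathrm{sum_s}}$.

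The argument goes through verbatim for $k$-synchronised, $(k,l)$-synchronised and Fibonacci-synchronised sequences, since in each case the ambient first-order theory enjoys the same closure properties; the only point that needs attention is that the number system(s) in which the arguments and values of $\bm{\mathrm{sum_s}}$ and $\bm{\mathrm{ind_s}}$ are read must be those attached to $s$. I do not anticipate a genuine conceptual obstacle: the work is essentially bookkeeping — getting the shifts by $1$ in the two inverse relations exactly right (in particular keeping the forward direction from referring to a non-existent ``$\bm{\mathrm{ind_s}}(m)$''), handling the degenerate case cleanly, and confirming that the formulas used really do lie in the permitted first-order language.
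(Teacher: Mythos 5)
Your proposal is correct and follows essentially the same route as the paper: both express the graph of each of $\bm{\mathrm{sum_s}}$ and $\bm{\mathrm{ind_s}}$ as a first-order formula over the graph of the other (plus order and the automatic sequence $s$) and then invoke closure of synchronised relations under first-order constructions. The only differences are cosmetic --- you characterise $\bm{\mathrm{sum_s}}(n)=m$ by sandwiching $n$ between consecutive values of $\bm{\mathrm{ind_s}}$ and recover $\bm{\mathrm{ind_s}}$ from first differences of $\bm{\mathrm{sum_s}}$ where the paper quantifies over a gap of zeros and tests $s(k)=1$ directly --- and you are somewhat more explicit about indexing and the finitely-many-ones degenerate case.
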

\begin{proof}
If $B$ is a synchronising automaton for $\bm{\mathrm{sum_s}}$, then $B$ accepts the pair $(k, n)$ if and only if $\bm{\mathrm{sum_s}}$$(k) = n$. Define the automaton $A$ by the logical formula
$$
A(n,k) :=  B(k,n) \land s(k) = 1
$$
Then, $A$ is a synchronising automaton for $\bm{\mathrm{ind_s}}$.
Conversely, suppose $A$ is a synchronising automaton for $\bm{\mathrm{ind_s}}$. So, $A$ accepts the pair $(n, k)$ if and only if the $n$'th $1$ in $s$ appears at index $k$, i.e. $\bm{\mathrm{ind_s}}$$(n) = k$. Define the automaton $B$ by the first order logical expression
$$
B(k,n) :=  \exists r (r \leq k): A (n,r) \land (\forall i: r+1 \leq i \leq k \,\,\, s(i) = 0).
$$
Then, $B$ is a synchronising automaton for  $\bm{\mathrm{sum_s}}$.
\end{proof}

\bigskip

\section{Synchronised sequences}
\label{sync}

We will now examine some automatic sequences for which the running sums are synchronised. Our method is to first guess the synchronising automaton using the Myhill-Nerode theorem and then verify the guess with  {\tt Walnut}.

The running sum of the Thue-Morse sequence is known to be $2$-synchronised. The automaton representing the running sum, which we call $\bm{\mbox{tmsum}}$, is given at  figure~\ref{tmsum}.  The proof that this is the correct automaton uses an inductive argument. Firstly, the automaton gives the correct answer for $n = 0, 1$. Next, for each $n$, we can show that there is a unique $s$ such that the automaton accepts the pair $(n,s)$. Finally, we can show that if the automaton accepts the pair $(n,s)$, then it also accepts the pair $(n+1, s + \bm{\mbox{T}}(n+1) )$ where $\bm{\mbox{T}}$ is the Thue-Morse sequence. These claims are established by the following  {\tt Walnut} commands.

\bigskip

\begin{figure}[htbp]
   \begin{center}
    \includegraphics[width=6in]{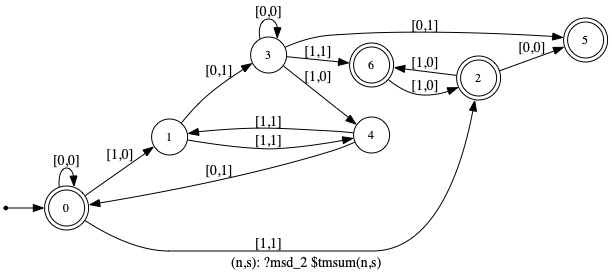}
    \end{center}
    \caption{Automaton $\bm{\mbox{tmsum}}$ for the running sum of the Thue-Morse sequence.}
    \label{tmsum}
\end{figure}

\bigskip

\begin{verbatim}
eval testtmsum "?msd_2 An,s,t $tmsum(n,s) & $tmsum(n,t) => s=t":
eval testtmsum "?msd_2 An Es $tmsum(n,s)":
eval testtmsum "?msd_2 An,s,u ($tmsum(n,s) & T[n+1]=u) 
        => $tmsum(n+1, s+u)":
\end{verbatim}

{\tt Walnut} returns the value TRUE for each of the four statements. The first and second commands show that for each $n$, there is a unique $s$ such that $\bm{\mbox{tmsum}}$ accepts the pair $(n,s)$. The third command shows that $\bm{\mbox{tmsum}}$ does the right thing as $n$ increases.

Next is the Cantor sequence, which is the characteristic sequence of the Cantor integers. An integer is a Canton integer if its base $3$ representation only contains the integers $0$ and $2$. The running sum of the Cantor sequence is $(3, 2)$-synchronised and a synchronising automaton, called $\bm{\mbox{cansum}}$,  is given at figure~\ref{casum}. The correctness of the automaton can again be established using  {\tt Walnut}. The commands are given below. $\bm{\mbox{CA}}$ is  {\tt Walnut}'s name for the Cantor sequence.

\bigskip

\begin{figure}[htbp]
   \begin{center}
    \includegraphics[width=3in]{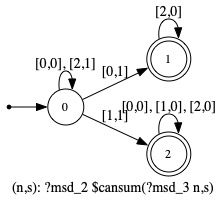}
    \end{center}
    \caption{Automaton $\bm{\mbox{cansum}}$ for the running sum of the Cantor sequence.}
    \label{casum}
\end{figure}

\bigskip

\begin{verbatim}
eval testcs1 "An Es $cansum(?msd_3 n,s)":
eval testcs2 "An,s,t $cansum(?msd_3 n,s) & $cansum(?msd_3 n,t) => s=t":
eval testcs3 "?msd_2 An,s,u ($cansum(?msd_3 n,s) & CA[?msd_3 n+1]=u) 
        => $cansum(?msd_3 n+1, s+u)":
\end{verbatim}

\bigskip

The second-bit sequence $\bm{\mbox{sb}}$ is defined by:
$$
\bm{\mbox{sb}}(n) = a_2 \text{    where   } n = a_1 a_2 \dots  \text{  is the base 2 representation of n in msd form.}
$$

The running sum of the second-bit sequence is $2$-synchronised and the synchronising automaton, called $\bm{\mbox{sbsum}}$, appears at figure~\ref{sbsum}. The correctness of the automaton can be established using  {\tt Walnut} in the same way as for the earlier sequences. 

\bigskip

\begin{figure}[htbp]
   \begin{center}
    \includegraphics[width=3in]{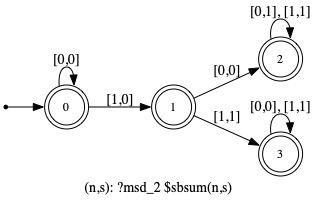}
    \end{center}
    \caption{Automaton $\bm{\mbox{sbsum}}$ for the running sum of the second-bit sequence.}
    \label{sbsum}
\end{figure}

\bigskip

The next sequence in this section is the twisted Thue-Morse sequence,  $\bm{\mbox{ttm}}$. For an integer $n \geq 0$, $\bm{\mbox{ttm}} (n)$ is the number of $0$'s, modulo $2$, in the base $2$ representation of $n$, with $\bm{\mbox{ttm}}(0)$  defined to be $0$. The running sum of the twisted Thue-Morse sequence is $2$-synchronised and the synchronising automaton, called $\bm{\mbox{ttmsum}}$, appears at figure~\ref{ttmsum}. Again,  {\tt Walnut} will verify the correctness of the automaton.

\bigskip

\begin{figure}[htbp]
   \begin{center}
    \includegraphics[width=6in]{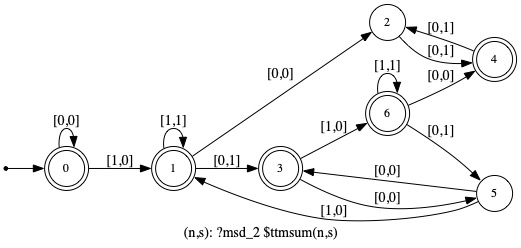}
    \end{center}
    \caption{Automaton $\bm{\mbox{ttmsum}}$ for the running sum of the twisted Thue-Morse sequence.}
    \label{ttmsum}
\end{figure}

\bigskip

Finally, we examine the sequence $\gamma$, the $n$'th value of which is the highest power of $2$ dividing $n!$. The sequence is of interest when considering the values of $n$ for which $n!$ can be written as a sum of three squares.\cite{Burns:2021aa}\cite{Burns:2022aa} The sequence itself is not synchronised because, according to Legendre's formula, $\gamma_n = n - \sum_{k \geq 0} a_k$ when $n$ has binary representation $\sum_{k \geq 0} a_k 2^k$. Hence, if $\gamma$ were synchronised, the sequence which maps $n$ to the sum of its binary digits would also be synchronised by theorem \ref{shallit}. However, that sequence is bounded by  $\mathcal{O} (\log n) = \smallO (n)$, contradicting theorem \ref{shallit2}. Instead, we consider the sequence $\bar{\gamma} = \gamma \pmod{2}$, which is a $2$-automatic sequence. The running sum of $\bar{\gamma}$ is $2$-synchronised. An automaton for $\bar{\gamma}$ is shown in figure  \ref{gamaut} and a synchronising automaton for $\mathbf{sum_{\bar{\gamma}}}$ is shown in figure \ref{gamsumaut}. Both automata take integers in least significant digit first order.

\begin{figure}[htbp]
   \begin{center}
    \includegraphics[width=3in]{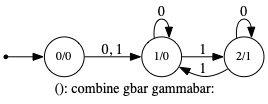}
    \end{center}
    \caption{Automaton for the sequence $\bar{\gamma}$.}
    \label{gamaut}
\end{figure}

\bigskip

\begin{figure}[htbp]
   \begin{center}
    \includegraphics[width=6in]{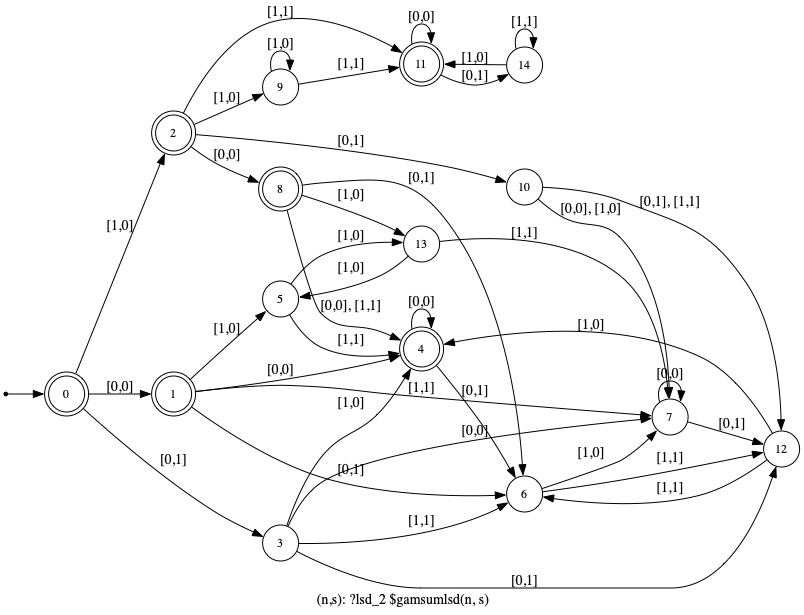}
    \end{center}
    \caption{Synchronising automaton for the running sum of the sequence $\bar{\gamma}$.}
    \label{gamsumaut}
\end{figure}

\bigskip

\section{Unsynchronised sequences}
\label{unsync}

This section examines automatic sequences for which the running sum is not synchronised. We rely heavily on linear representations for the running sums. A linear representation of a function $f$ defined over an alphabet $\{0, 1, \dots, b-1 \}$ consists of vectors $v$ and $w$ and matrices $M_0, M_1, \dots, M_{b-1}$ such that, if $n_k n_{k-1} \dots n_0$  is the base $b$ representation of n in msd form, then $f(n) = v*M_{n_k} * M_{n_{k-1}} * \dots * M_{n_0} *w$.  Given an automatic sequence, {\tt Walnut} will provide the linear representation for the running sum of the sequence. For example, the {\tt Walnut}  command

\begin{verbatim}
eval tmsumrep n "?msd_2 (j>=0) & (j<=n) & T[j]=@1":
\end{verbatim}

produces a linear representation for the running sum of the Thue-Morse sequence. In this case $v$, $w$, $M_0$ and $M_1$ are given by:

\begin{align}
v  = \begin{pmatrix} 1,0,0,0 \end{pmatrix}
\, \, \text{ and } \, \, 
w = \begin{pmatrix} 0,0,1,1 \end{pmatrix}
\end{align}
\begin{align}
M_0  = \begin{pmatrix}  1,0,0,0\\ 0,1,0,1\\ 0,0,1,0\\ 0,1,0,1 \end{pmatrix}
\, \, \text{ and } \, \, 
M_1 = \begin{pmatrix}  0,1,1,0\\ 0,1,0,1\\ 1,0,0,1\\ 0,1,0,1 \end{pmatrix}.
\end{align}

For most sequences in this section, the linear representation of the running sum will be used to calculate the growth rate of a sub-sequence determined by a regular expression. We will then use theorem \ref{shallit} to construct a new sequence with growth rate that conflicts with theorem \ref{shallit2}. As a result, we deduce that the original sequence cannot have been synchronised. A suitable subsequence can be found by looking for a product of the matrices $\{ M_0, \dots, M_{b-1} \}$ for which the minimal polynomial has a repeated non-zero root.

\bigskip

\subsection{Period-doubling sequence}
\label{pdsec}
The period-doubling sequence $\bm{\mbox{pd }} = d_0 d_1 d_2 \dots$ satisfies the relations 
$$
d_{2n} = 1, \, \, \, d_{4n+1} = 0, \, \, \, d_{4n+3} = d_n .
$$
In order to show that $\bf sum_{pd}$, the running sum of the period-doubling sequence,  is not 2-synchronised, we will first calculate the value of $\mathbf{sum_{pd}}$$(n)$ when $n =  (4^{r+1} - 1)/3$ for $r \in  \mathbb{N}$. The base 2 representation of $(4^{r+1} - 1)/3$ is $0* (10)^r 1$.

\bigskip

\begin{lemma}
$\mathbf{sum_{pd}}$$( (4^{r+1} - 1)/3)= (2 * 4^{r+1} + 3r + 1)/9$.
\end{lemma}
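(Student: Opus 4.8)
The plan is to convert the statement into a first‑order recursion in $r$ and then solve it in closed form. Write $N_r := (4^{r+1}-1)/3 = \sum_{j=0}^{r}4^j$, so that $N_0 = 1$ and $N_r = 4N_{r-1}+1$ for $r\geq 1$; this is consistent with the stated base‑$2$ word $0^*(10)^r1$. A one‑line induction (base $N_0 = 1$, step $N_{r-1} = 4N_{r-2}+1$) also gives $N_{r-1}\equiv 1\pmod 4$ for every $r\geq 1$, a fact needed below.

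The core step is to prove
$$\mathbf{sum_{pd}}(N_r) = \mathbf{sum_{pd}}(N_{r-1}) + 2N_{r-1} + 1\qquad(r\geq 1).$$
For this I would split $\sum_{i=0}^{N_r}d_i = \sum_{i=0}^{4N_{r-1}+1}d_i$ into the four residue classes of $i$ modulo $4$ and apply the defining relations $d_{2n}=1$, $d_{4n+1}=0$, $d_{4n+3}=d_n$. Counting which indices of each class lie in $\{0,\dots,4N_{r-1}+1\}$: the class $i\equiv 0$ contributes $N_{r-1}+1$ (all ones), the class $i\equiv 2$ contributes $N_{r-1}$ (all ones), the class $i\equiv 1$ contributes $0$, and the class $i = 4k+3$ with $0\leq k\leq N_{r-1}-1$ contributes $\sum_{k=0}^{N_{r-1}-1}d_k = \mathbf{sum_{pd}}(N_{r-1}) - d_{N_{r-1}}$. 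Since $N_{r-1}\equiv 1\pmod 4$ we have $d_{N_{r-1}}=0$, and adding the four contributions yields the displayed recursion.

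Finally I would solve the recursion, either by telescoping or by induction on $r$. The base case $r=0$ is $\mathbf{sum_{pd}}(1) = d_0 + d_1 = 1 = (2\cdot 4 + 0 + 1)/9$. For the inductive step, rewrite $2N_{r-1}+1 = (2\cdot 4^r+1)/3$, substitute the induction hypothesis $\mathbf{sum_{pd}}(N_{r-1}) = (2\cdot 4^r + 3(r-1)+1)/9$, and clear the common denominator $9$; the arithmetic collapses to $(2\cdot 4^{r+1}+3r+1)/9$. The only place demanding care is the residue‑class bookkeeping in the middle step — specifically getting the upper limit of each range right and noticing that the $4k+3$ class stops one index short, so one subtracts $d_{N_{r-1}}$ and then invokes $N_{r-1}\equiv 1\pmod 4$; everything else is routine. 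An alternative route, closer in spirit to the rest of the section, would be to read $\mathbf{sum_{pd}}(N_r) = v\,M_1(M_0M_1)^r w$ directly off the linear representation of $\mathbf{sum_{pd}}$ and compute the power of $M_0M_1$ from its minimal polynomial, but that is heavier than the direct count.
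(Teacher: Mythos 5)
Your proof is correct, but it takes a genuinely different route from the paper. The paper reads $\mathbf{sum_{pd}}((4^{r+1}-1)/3) = v\,(M_1M_0)^r M_1 w$ off {\tt Walnut}'s linear representation, factors the minimal polynomial of $M_1M_0$ as $(x-4)\,x\,(x-1)^2$ to conclude the value has the form $a+br+c\,4^r$, and then fits the three constants from computed values. You instead work directly from the defining relations $d_{2n}=1$, $d_{4n+1}=0$, $d_{4n+3}=d_n$: splitting $\sum_{i=0}^{4N_{r-1}+1}d_i$ by residue class modulo $4$ gives the recursion $\mathbf{sum_{pd}}(N_r)=\mathbf{sum_{pd}}(N_{r-1})+2N_{r-1}+1$, which you then solve. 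I checked the bookkeeping: the counts $N_{r-1}+1$, $N_{r-1}$, $0$ for the classes $0,2,1 \pmod 4$ are right, the class $4k+3$ does stop at $k=N_{r-1}-1$ so the correction term $d_{N_{r-1}}$ is needed, and $N_{r-1}\equiv 1\pmod 4$ kills it; the base case $\mathbf{sum_{pd}}(1)=d_0+d_1=1$ and the telescoping both check out against the closed form (e.g.\ $\mathbf{sum_{pd}}(5)=4$, $\mathbf{sum_{pd}}(21)=15$). Your argument is self-contained and does not depend on trusting the machine-generated matrices, which is a genuine advantage; it also sidesteps a small wrinkle in the paper's route, namely that the eigenvalue $0$ of $M_1M_0$ contributes a term $\delta\cdot 0^r$ that the ansatz $a+br+c\,4^r$ silently drops (harmless for $r\ge 1$, but worth a word). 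The trade-off is that the paper's method is mechanical and transfers unchanged to the other sequences in Section 5, whereas your residue-class decomposition is specific to the period-doubling recurrences.
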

\begin{proof}
We can use the linear representation of $\mathbf{sum_{pd}}$ provided by {\tt Walnut}, and the base 2 representation of $(4^{r+1} - 1)/3$, to calculate \mbox{$\mathbf{sum_{pd}}$$(4^{r+1}-1)/3)$}. The linear representation is determined by the vectors $v$ and $w$ and matrices $M_0$ and $M_1$ below:
\begin{align}
v  = \begin{pmatrix} 1,0,0,0 \end{pmatrix}
\, \, \text{ and } \, \, 
w = \begin{pmatrix} 1,1,0,0 \end{pmatrix}
\end{align}
\begin{align}
M_0  = \begin{pmatrix} 1,0,0,0\\ 0,1,0,1\\ 1,0,0,0\\ 0,2,0,0 \end{pmatrix}
\, \, \text{ and } \, \, 
M_1 = \begin{pmatrix} 0,1,1,0\\ 0,1,0,1\\ 1,1,0,0\\ 0,2,0,0 \end{pmatrix}.
\end{align}

\bigskip

The matrix $M_1 * M_0$ has minimal polynomial 
$$
x^4 - 6x^3 + 9x^2 - 4x = (x - 4)  x  (x - 1)^2.
$$ 

The theory of linear recurrences says there are constants $a$, $b$ and $c$ such that

$$
\mathbf{sum_{pd} }((4^{r+1} - 1)/3) = v* (M_1 * M_0)^r * M1 * w = a + br + c4^r.
$$

We then use specific values of $\bf sum_{pd}$ $(n)$ to deduce that $a = 1/9$, $b = 3/9$ and $c = 8/9$ as required. 
\end{proof}

\bigskip

\begin{corollary}
\label{pdsumcor}
The running sum of the period-doubling sequence is not 2-synchronised.
\end{corollary}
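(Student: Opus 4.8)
The plan is to derive a contradiction from the growth dichotomy of Theorem~\ref{shallit2}, exploiting the fact that the minimal polynomial of $M_1 M_0$ has the repeated root $x=1$: this repeated root is what produces the linear-in-$r$ term $3r/9$ in the lemma, and after rescaling that term becomes an unbounded but $\smallO(n)$ function, which is forbidden for $2$-synchronised sequences. So suppose, for contradiction, that $\mathbf{sum_{pd}}$ is $2$-synchronised. First I would record that the set $S = \{\,(4^{r+1}-1)/3 : r \ge 0\,\}$ is $2$-automatic, since its base-$2$ representations are exactly the words of the regular language $0^*(10)^r 1$. Hence, by the closure property stated in Section~\ref{bg} (restriction of a synchronised function to an automatic set), the function $g$ defined by $g(n) = \mathbf{sum_{pd}}(n)$ for $n \in S$ and $g(n) = 0$ for $n \notin S$ is $2$-synchronised.

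Next I would put the lemma into a convenient form. For $n = (4^{r+1}-1)/3$ one has $4^{r+1} = 3n+1$, so the lemma rewrites as
$$
\mathbf{sum_{pd}}(n) \;=\; \frac{2(3n+1) + 3r + 1}{9} \;=\; \frac{2n + r + 1}{3},
$$
that is, $3\,\mathbf{sum_{pd}}(n) = 2n + (r+1)$ on $S$. Now $3g(n) = \lfloor 3 g(n)\rfloor$ is $2$-synchronised by Theorem~\ref{shallit}(d), and $n \mapsto 2n$ is trivially $2$-synchronised, so by Theorem~\ref{shallit}(b) the monus $\varphi(n) := 3g(n) \dotdiv 2n$ is $2$-synchronised. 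By the displayed identity, $\varphi\bigl((4^{r+1}-1)/3\bigr) = r+1$ for every $r \ge 0$ (the value $r+1$ being strictly positive, so the monus is a genuine subtraction on $S$), while $\varphi(n) = 0$ for $n \notin S$.

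Finally I would confront $\varphi$ with Theorem~\ref{shallit2}, applied with $k = l = 2$ and hence $\beta = 1$. On one hand $\varphi$ is unbounded, since $\varphi\bigl((4^{r+1}-1)/3\bigr) = r+1 \to \infty$. On the other hand, whenever $\varphi(n) \ne 0$ we have $n = (4^{r+1}-1)/3 \ge 4^r$ and $\varphi(n) = r+1$, so $\varphi(n) = \mathcal{O}(\log n) = \smallO(n^{\beta})$. By Theorem~\ref{shallit2}(b) this forces $\varphi(n) = \mathcal{O}(1)$, contradicting unboundedness. Therefore $\mathbf{sum_{pd}}$ cannot be $2$-synchronised.

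The genuinely delicate points are modest: one must verify that $S$ really is a regular language in base $2$, and carry out the arithmetic simplification of the lemma correctly --- in particular check that $3\,\mathbf{sum_{pd}}(n) - 2n = r+1 > 0$ on $S$ so that $\varphi$ behaves as an honest difference there. Beyond that the argument is a mechanical assembly of the closure and growth results already available; the conceptual step, which is what makes the corollary work at all, is recognising that the repeated root of the minimal polynomial of $M_1 M_0$ forces $\mathbf{sum_{pd}}$ to carry a hidden $\Theta(\log n)$ term along the sub-sequence indexed by $S$.
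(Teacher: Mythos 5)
Your proof is correct and follows essentially the same route as the paper: restrict to the $2$-automatic set $S$ of integers with base-$2$ representation $0^*(10)^r1$, use the closure properties of Theorem~\ref{shallit} to extract the linear-in-$r$ term from the lemma, and contradict the growth dichotomy of Theorem~\ref{shallit2}. The only (cosmetic, and arguably cleaner) difference is that you form $3g(n) \dotdiv 2n = r+1$ instead of the paper's $\mathbf{sum_{pd}}(n) \dotdiv \lfloor 2n/3\rfloor = \lfloor (r+1)/3\rfloor$, which avoids the floor bookkeeping.
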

\begin{proof}
The function $ \lfloor 2n/3 \rfloor$ is synchronised by theorem \ref{shallit} We define the function $g$ by
$$
g(n) =
\begin{cases}
  \mathbf{sum_{pd} }(n) \dotdiv \lfloor 2n/3 \rfloor, & \text{if } \, \, n =  (4^{r+1} - 1)/3 \\
    0 & \text{otherwise}
\end{cases}
$$
If $\mathbf{sum_{pd}}$ $(n)$ is synchronised, then so is $\bf sum_{pd}$ $(n) \dotdiv \lfloor (2n+1)/3 \rfloor$ by theorem~\ref{shallit}. Hence, $g$ is also synchronised since the set $\{ (4^{r+1} - 1)/3: r \in \mathbb{N} \}$ comes from a regular expression. When $n =  (4^{r+1} - 1)/3$,
\begin{align*}
g(n) &= (2 * 4^{r+1} + 3r + 1)/9 \dotdiv \lfloor (2 * 4^{r+1}-2)/9 \rfloor \\
                                          & = \lfloor(r+1)/3 \rfloor.
\end{align*}

\bigskip

Since $g(n) = \mathcal{O}(\log n) = \smallO(n)$ and is unbounded, it cannot be a synchronised function by theorem~\ref{shallit2}. Hence, $\bf sum_{pd}$ is not $2$-synchronised.
\end{proof}

\bigskip

\subsection{Mephisto Waltz sequence}
\label{mwsec}

The Mephisto Waltz sequence $\bm{\mbox{mw }} = m_0, m_1, \dots$ is a $3$-automatic sequence in which the value of $m_n$ is given by the number of $2$'s modulo $2$ in the base $3$ representation of $n$. The running sum of $\bm{\mbox{mw}}$, $\mathbf{sum_{mw}}$ , has a linear representation given by the vectors  $v$ and $w$ and matrices $M_0, M_1$ and $M_2$ below:

\bigskip

\begin{align}
v  = \begin{pmatrix} 1,0,0,0 \end{pmatrix}
\, \, \text{ and } \, \, 
w = \begin{pmatrix} 0,0,1,1 \end{pmatrix}
\end{align}
\begin{align}
M_0  = \begin{pmatrix}  1,0,0,0\\ 0,2,0,1\\ 0,0,1,0\\ 0,1,0,2 \end{pmatrix}
\, \, \, \, 
M_1  = \begin{pmatrix}  1,1,0,0\\ 0,2,0,1\\ 0,0,1,1\\ 0,1,0,2 \end{pmatrix}
\, \, \text{ and } \, \, 
M_2 = \begin{pmatrix}  0,2,1,0\\ 0,2,0,1\\ 1,0,0,2\\ 0,1,0,2 \end{pmatrix}
\end{align}

\bigskip

\begin{lemma}
When $n =  (3^{2r} - 1)/2$, $\mathbf{sum_{mw}}$$(n) = n/2 - r$.
\end{lemma}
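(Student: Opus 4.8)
The plan is to follow the template of the preceding lemma. First I would identify the base-$3$ representation of $n=(3^{2r}-1)/2$. Since $3^{2r}-1$ is written $2^{2r}$ in base $3$ (a block of $2r$ twos), dividing by $2$ shows that $n$ is written $1^{2r}$ in base $3$, possibly with some leading zeros; leading zeros are represented by $M_0$, and since $vM_0=v$ they do not affect the value. Hence
$$
\mathbf{sum_{mw}}(n) = v\,M_1^{\,2r}\,w .
$$

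Next I would compute the minimal polynomial of $M_1$. A direct calculation (e.g.\ with SageMath, as in the period-doubling lemma) gives characteristic polynomial $(x-1)^3(x-3)$ and minimal polynomial $(x-1)^2(x-3)$. By the theory of linear recurrences there are constants $a,b,c$ with $vM_1^{\,m}w = a + bm + c\,3^{m}$ for all $m\ge 0$, so taking $m=2r$ gives $\mathbf{sum_{mw}}(n) = a + 2br + c\,9^{r}$. To pin down the constants I would evaluate $vM_1^{\,m}w$ for $m=0,1,2$, obtaining $0$, $0$ and $1$ respectively; this forces $a=-\tfrac14$, $b=-\tfrac12$, $c=\tfrac14$. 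Substituting back and using $9^{r}=3^{2r}=2n+1$ yields
$$
\mathbf{sum_{mw}}(n) = \frac{2n+1}{4} - r - \frac14 = \frac n2 - r ,
$$
as claimed.

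There is no conceptual obstacle here: the only real work is the matrix bookkeeping — checking that the minimal polynomial of $M_1$ is genuinely $(x-1)^2(x-3)$ and not $(x-1)(x-3)$ (the latter would eliminate the linear term and hence the $-r$), and computing the three base values — and this is entirely mechanical. As in the period-doubling case, one could alternatively read the closed form straight off {\tt Walnut}'s linear representation and simply quote the resulting polynomial in $r$.
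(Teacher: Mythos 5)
Your proof is correct and follows essentially the same route as the paper: identify the base-$3$ representation of $n$ as $1^{2r}$, use the repeated root $(x-1)^2$ in the minimal polynomial to get a closed form $a+bm+c\,3^m$, and fit the constants from small cases (the paper works with $M_1^2$ and the parameter $r$ directly, so its $b=-1$ matches your $2b$). The only difference is that you spell out the initial values $0,0,1$ and the leading-zero justification, which the paper leaves implicit.
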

\begin{proof}
The matrix $M_1^2$ has minimal polynomial 
$$
x^3 - 11x^2 + 19x - 9 = (x - 9)  (x - 1)^2
$$ 

Hence, there are constants $a$, $b$ and $c$ such that

$$
\mathbf{sum_{mw}} ( (3^{2r} - 1)/2) = v* (M_1 )^{2r} * w = a + br + c3^{2r}.
$$

We then use specific values of $\mathbf{sum_{mw}}$$(n)$ to deduce that $a = -1/4$, $b = -1$ and $c = 1/4$. Therefore, when $n =   (3^{2r} - 1)/2$, 

\begin{align*}
\mathbf{sum_{mw}}(n) &= -1/4 - r + 3^{2r}/4 =(3^{2r} - 1)/4 - r\\
                                                      &= n/2 - r
\end{align*}                                               
\end{proof}

\bigskip

\begin{corollary}
\label{mwsumcor}
The running sum of the Mephisto Waltz sequence is not $3$-synchronised.
\end{corollary}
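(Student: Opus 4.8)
The plan is to follow the template of Corollary~\ref{pdsumcor} verbatim. Assuming $\mathbf{sum_{mw}}$ is $3$-synchronised, I would subtract from it a genuinely synchronised linear function, chosen so that the dominant term $9^{r}$ cancels along the subsequence $n = (3^{2r}-1)/2$, leaving a quantity that grows only logarithmically in $n$; Theorem~\ref{shallit2} then forbids this, giving the contradiction.

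Here are the steps in the order I would carry them out. First, note that the set $S = \{(3^{2r}-1)/2 : r \in \mathbb{N}\}$ is regular: the base-$3$ representation of $(3^{2r}-1)/2$ is the block $1^{2r}$, so $S$ is described by the regular expression $0^{*}(11)^{*}$. Second, by Theorem~\ref{shallit}(d) applied with $\alpha = 1/2$ to the trivially synchronised sequence $n \mapsto n$, the function $n \mapsto \lfloor n/2 \rfloor$ is $3$-synchronised. Third, supposing for contradiction that $\mathbf{sum_{mw}}$ is $3$-synchronised, Theorem~\ref{shallit}(b) makes $n \mapsto \lfloor n/2 \rfloor \dotdiv \mathbf{sum_{mw}}(n)$ synchronised, and restricting this function to the regular set $S$ (via the construction recalled in Section~\ref{bg}) yields a synchronised function $g$ with $g(n) = \lfloor n/2 \rfloor \dotdiv \mathbf{sum_{mw}}(n)$ for $n \in S$ and $g(n) = 0$ otherwise. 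Fourth, the preceding lemma gives $\mathbf{sum_{mw}}(n) = n/2 - r$ for $n = (3^{2r}-1)/2$; since every such $n$ is even and $r \geq 0$, the monus does not truncate, so $g(n) = r$ on $S$. Finally, since $n = (9^{r}-1)/2$ we have $r = \Theta(\log n)$, hence $g$ is unbounded yet $g(n) = \mathcal{O}(\log n) = \smallO(n)$; with $\beta = (\log 3)/(\log 3) = 1$ this contradicts Theorem~\ref{shallit2}(b). Therefore $\mathbf{sum_{mw}}$ is not $3$-synchronised.

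The only delicate point — and the one I would take care to check — is the orientation of the monus and the exact coefficient. One must subtract $\mathbf{sum_{mw}}(n)$ from $\lfloor n/2 \rfloor$ rather than the reverse, since on $S$ we have $\mathbf{sum_{mw}}(n) < n/2$, so $\mathbf{sum_{mw}}(n) \dotdiv \lfloor n/2 \rfloor$ would be identically $0$ and useless; and one must use the coefficient $1/2$ exactly, because subtracting $\lfloor \alpha n \rfloor$ for any rational $\alpha \neq 1/2$ leaves a subsequence that grows linearly in $n$, which is fully consistent with the $\mathcal{O}(n)$ bound of Theorem~\ref{shallit2} and produces no contradiction. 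Everything else is the routine computation already performed in Section~\ref{pdsec}.
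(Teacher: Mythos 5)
Your proposal is correct and follows essentially the same route as the paper: both define $g(n) = \lfloor n/2 \rfloor \dotdiv \mathbf{sum_{mw}}(n)$ restricted to the regular set $n = (3^{2r}-1)/2$, use the preceding lemma to get $g(n) = r = \Theta(\log n)$, and invoke Theorem~\ref{shallit2} for the contradiction. You merely spell out more explicitly the details (regularity of the set, the orientation of the monus, and the choice of coefficient $1/2$) that the paper leaves implicit.
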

\begin{proof}
We consider the function

$$
g(n) =
\begin{cases}
    n/2  \dotdiv \mathbf{sum_{mw}}(n), & \text{if } \, \, n =   (3^{2r} - 1)/2 \\
    0 & \text{otherwise.}
\end{cases}
$$

Then $g(n) = \mathcal{O}(\log n) = \smallO(n)$ and is unbounded, and so cannot be a synchronised function by theorem~\ref{shallit2}. Hence, $\mathbf{sum_{mw}}$ is not $3$-synchronised.
\end{proof}

\bigskip

\subsection{Paper-folding sequence}
\label{pfsec}

The regular paper-folding sequence $\bm{\mbox{pf }} = f_0 f_1 f_2 \dots$ satisfies the relations
$$
f_{2n+1} = f_{n}, \, \, \, f_{4n} = 0, \, \, \, f_{4n+2} = 1.
$$
The argument to show that $\mathbf{sum_{pf}}$,  the running sum of the paper-folding sequence, is not $2$-synchronised, is similar to that used for the period-doubling sequence. We examine $\mathbf{sum_{pf}}$  when $n$ has the binary representation given by the regular expression $(1 0 )^r 1$. Such an $n = (4^{r+1}-1)/3$. A linear representation for $\mathbf{sum_{pf}}$ is given by the vectors  $v$ and $w$ and matrices $M_0$ and $M_1$ below:

\bigskip

\begin{align}
v  = \begin{pmatrix} 1, 0, 0, 0, 0, 0, 0, 0 \end{pmatrix}
\, \, \text{ and } \, \, 
w = \begin{pmatrix} 0, 0, 0, 0, 1, 1, 1, 1 \end{pmatrix}
\end{align}
\begin{align}
M_0  = \begin{pmatrix}  
1,0,0,0,0,0,0,0\\ 0,1,0,1,0,0,0,0\\ 0,0,0,0,1,0,0,0\\ 0,0,0,1,0,1,0,0\\ 1,0,0,0,0,0,0,0\\ 0,1,0,0,0,0,0,1\\ 0,0,0,0,1,0,0,0\\ 0,0,0,0,0,1,0,1 \end{pmatrix}
\, \, \text{ and } \, \, 
M_1 = \begin{pmatrix}  
0,1,1,0,0,0,0,0\\ 0,1,0,1,0,0,0,0\\ 0,0,1,0,0,1,0,0\\ 0,0,0,1,0,1,0,0\\ 0,1,0,0,0,0,1,0\\ 0,1,0,0,0,0,0,1\\ 0,0,0,0,0,1,1,0\\ 0,0,0,0,0,1,0,1 \end{pmatrix}
\end{align}

\bigskip

\begin{lemma}
When $n =  (4^{r+1}-1)/3$, $\mathbf{sum_{pf}}$$(n) =  (n+1)/2 - r$.
\end{lemma}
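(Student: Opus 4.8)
The plan is to repeat, with the $8 \times 8$ matrices of this subsection, the argument used for the period-doubling sequence in Section~\ref{pdsec}. Since $n = (4^{r+1}-1)/3$ has base $2$ representation $(10)^r 1$ (after deleting leading zeros), the linear representation gives directly
\[
\mathbf{sum_{pf}}(n) \;=\; v\,(M_1 M_0)^r\, M_1\, w .
\]
So the first step is to have SageMath~\cite{sagemath} compute the minimal polynomial of $M_1 M_0$. I expect it to be divisible by $(x-4)(x-1)^2$, with any remaining root equal to $0$ --- exactly as the period-doubling product matrix had minimal polynomial $x(x-1)^2(x-4)$; the ambient space is $8$-dimensional here rather than $4$-dimensional, but the extra directions should be annihilated by $v$, by $w$, or by nilpotency, so that only the eigenvalues $1$ (with multiplicity $2$) and $4$ govern the growth of $v\,(M_1M_0)^r\,M_1\,w$.

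Granting this, the theory of linear recurrences produces constants $a, b, c$ with
\[
\mathbf{sum_{pf}}\bigl((4^{r+1}-1)/3\bigr) \;=\; a + b\,r + c\,4^r
\]
(valid for $r \ge 1$; any zero eigenvalue can perturb only the single smallest case, which one checks directly). I would then evaluate $\mathbf{sum_{pf}}$ at three values --- say $r = 1, 2, 3$, i.e. $n = 5, 21, 85$ --- either from the relations $f_{2n+1} = f_n$, $f_{4n} = 0$, $f_{4n+2} = 1$ or from the linear representation, and solve the resulting $3 \times 3$ system, anticipating $a = 1/3$, $b = -1$, $c = 2/3$.

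Finally, since $3n + 1 = 4^{r+1} = 4 \cdot 4^r$ we have $4^r = (3n+1)/4$, and substituting yields
\[
\mathbf{sum_{pf}}(n) \;=\; \frac{1}{3} - r + \frac{2}{3}\cdot\frac{3n+1}{4} \;=\; \frac{n+1}{2} - r ,
\]
which is the claim. The main obstacle is the first step: confirming that the minimal polynomial of the larger product matrix genuinely collapses to the three-term $\{1, 1, 4\}$ growth pattern, since with an $8 \times 8$ matrix there is more room for stray eigenvalues that would introduce additional terms; once that is settled, everything else is the same routine linear algebra as in the period-doubling case, delegated to SageMath.
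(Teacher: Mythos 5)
Your proposal matches the paper's proof essentially step for step: the paper computes the minimal polynomial of $M_1 M_0$ to be $x^4 - 6x^3 + 9x^2 - 4x = (x-4)\,x\,(x-1)^2$ (so your anticipated collapse to the $\{1,1,4\}$ growth pattern with a stray zero eigenvalue is exactly what happens), derives $\mathbf{sum_{pf}}((4^{r+1}-1)/3) = a + br + c4^r$, and fits $a = 1/3$, $b = -1$, $c = 2/3$ from specific values before substituting back. The only difference is that you flag the degenerate small-$r$ case caused by the zero eigenvalue explicitly, which the paper leaves implicit.
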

\begin{proof}
The matrix $M_1 * M_0$ has minimal polynomial 
$$
x^4 - 6x^3 + 9x^2 - 4x = (x - 4)  x  (x - 1)^2.
$$ 

There are constants $a$, $b$ and $c$ such that

$$
\mathbf{sum_{pf}} ((4^{r+1} - 1)/3) = v* (M_1 * M_0)^r * M1 * w = a + br + c4^r.
$$

We then use specific values of $\mathbf{sum_{pf}}$$(n)$ to deduce that $a = 1/3$, $b = -1$ and $c = 2/3$. Therefore, when $n =  (4^{r+1}-1)/3$, 

\begin{align*}
\mathbf{sum_{pf}}(n) &= 1/3 - r + 2*4^r/3 = (2*4^r + 1)/3 - r\\
                                                      &= (n+1)/2 - r
\end{align*}                                               
\end{proof}

\bigskip

\begin{corollary}
\label{pfsumcor}
The running sum of the paper-folding sequence is not 2-synchronised.
\end{corollary}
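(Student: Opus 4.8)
The plan is to follow the template of Corollaries \ref{pdsumcor} and \ref{mwsumcor}: combine the lemma just proved with the closure properties of Theorem \ref{shallit} and the growth dichotomy of Theorem \ref{shallit2} to extract, from a hypothetical synchronising automaton for $\mathbf{sum_{pf}}$, an unbounded $2$-synchronised function of growth $\smallO(n)$, which is impossible.

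First I would record two easy facts about the set $S = \{(4^{r+1}-1)/3 : r \in \mathbb{N}\}$: it is recognised by an automaton, since its base-$2$ representations are exactly the words matching $0^*(10)^r 1$, and every $n \in S$ is odd, so that $(n+1)/2 \in \mathbb{N}$ and $\lfloor (n+1)/2\rfloor = (n+1)/2$ on $S$. The function $n \mapsto \lfloor (n+1)/2 \rfloor$ is $2$-synchronised by Theorem \ref{shallit}: $n \mapsto n+1$ is synchronised (identity plus a constant, by part (a)), and then part (d) applies with $\alpha = 1/2$.

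Next, assume for contradiction that $\mathbf{sum_{pf}}$ is $2$-synchronised. By parts (a) and (b) of Theorem \ref{shallit}, the function $n \mapsto \lfloor (n+1)/2 \rfloor \dotdiv \mathbf{sum_{pf}}(n)$ is $2$-synchronised. Restricting it to $S$ by the construction described in Section \ref{bg} (set it to $0$ off $S$, which is legitimate since $S$ has an automatic characteristic sequence) yields a $2$-synchronised function $g$. On $S$, writing $n = (4^{r+1}-1)/3$, the lemma gives $\mathbf{sum_{pf}}(n) = (n+1)/2 - r \le (n+1)/2$, so the monus is ordinary subtraction and $g(n) = r$; off $S$, $g(n) = 0$.

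Finally, since $n = (4^{r+1}-1)/3$ forces $r$ to grow like $\log_4 n$, we have $g(n) = \mathcal{O}(\log n) = \smallO(n)$, yet $g$ is unbounded, taking the value $r$ for every $r \in \mathbb{N}$. This contradicts Theorem \ref{shallit2}(b), so $\mathbf{sum_{pf}}$ is not $2$-synchronised. I do not expect a genuine obstacle here: the substantive work — the minimal polynomial of $M_1 M_0$ and the constants $a,b,c$ — was already carried out in the lemma, and the only points needing a moment's care are the parity of the elements of $S$ (so that $(n+1)/2$ is integral and the floor is vacuous there) and the inequality $r \le (n+1)/2$ that turns the monus into honest subtraction, both immediate from the explicit form of $n$.
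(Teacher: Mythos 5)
Your proposal is correct and follows essentially the same route as the paper: the paper's proof defines exactly the function $g(n) = (n+1)/2 \dotdiv \mathbf{sum_{pf}}(n)$ on $n = (4^{r+1}-1)/3$ and $0$ elsewhere, notes $g(n) = \mathcal{O}(\log n) = \smallO(n)$ yet unbounded, and invokes Theorems \ref{shallit} and \ref{shallit2}. Your write-up merely makes explicit a few details the paper leaves implicit (regularity of the index set, oddness of $n$ so the floor is vacuous, and the inequality turning monus into subtraction).
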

\begin{proof}
We use the same argument as corollary \ref{pdsumcor}. This time we consider the function

$$
g(n) =
\begin{cases}
    (n+1)/2  \dotdiv \mathbf{sum_{pf}}(n), & \text{if } \, \, n =  (4^{r+1} - 1)/3 \\
    0 & \text{otherwise.}
\end{cases}
$$

Again, $g(n) = \mathcal{O}(\log n) = \smallO(n)$ and is unbounded, and so cannot be a synchronised function by theorem~\ref{shallit2}. Hence, $\mathbf{sum_{pf}}$ is not $2$-synchronised.
\end{proof}

\bigskip

\subsection{Leech's word}
\label{lwsec}
Leech's word $\bm{\mbox{le}}$ is a 13-automatic sequence defined over the alphabet $\{0, 1, 2 \}$. It is the fixed point of the 13-uniform morphism, starting from $0$:
\begin{align*}
0 \rightarrow \,\, &0,1,2,1,0,2,1,2,0,1,2,1,0\\
1 \rightarrow \,\, &1,2,0,2,1,0,2,0,1,2,0,2,1\\
2 \rightarrow \,\, &2,0,1,0,2,1,0,1,2,0,1,0,2
\end{align*}
and appears as sequence A337005 in the OEIS.\cite{oeis} The running sum of $\bm{\mbox{le}}$, $\mathbf{sum_{le}}$, is given by
$$
\mathbf{sum_{le}} = \mathbf{sum_{le1}} + 2 * \mathbf{sum_{le2}}
$$
where $\mathbf{sum_{le1}}$ is the running sum of the number of $1$'s appearing in $\mathbf{le}$ and $\mathbf{sum_{le2}}$ is the running sum of the number of $2$'s appearing in $\mathbf{le}$. We can obtain linear representations for $\mathbf{sum_{le1}}$ and $\mathbf{sum_{le2}}$ with the {\tt Walnut} commands

\begin{verbatim}
eval lesum1 n "?msd_13  (j>=0) & (j<=n) & LE[j]=@1":
eval lesum2 n "?msd_13  (j>=0) & (j<=n) & LE[j]=@2":
\end{verbatim}

$\bm{\mbox{LE}}$ is  {\tt Walnut}'s name for Leech's word. Each of these commands produces vectors $v$ and $w$ and matrices $M_0, M_1, \dots , M_{12}$. In each case the vectors  $v$ and the matrices are identical. The vectors $w$ differ and will be denoted by $w_1$ and $w_2$. We will calculate $\mathbf{sum_{le}}$$(n)$ when $n$ has the base 13 representation $(111)^r$, and so we will only need the matrix $M_1$. $v$, $w_1$, $w_2$ and $M_1$ are given below:

\begin{align*}
v  = \begin{pmatrix} 1, 0, 0, 0, 0, 0 \end{pmatrix}
\, \, \text{, } \, \, 
w_1 = \begin{pmatrix} 0, 0, 1, 1, 0, 0 \end{pmatrix}
\, \, \text{ and } \, \, 
w_2 = \begin{pmatrix} 0, 0, 0, 0, 1, 1 \end{pmatrix}
\end{align*}
\begin{align*}
M_1  = \begin{pmatrix} 0,1,1,0,0,0\\  0,4,0,5,0,4\\ 0,0,0,1,1,0\\ 0,4,0,4,0,5\\ 1,0,0,0,0,1\\ 0,5,0,4,0,4 \end{pmatrix}
\end{align*}

\bigskip

A linear representation for $\mathbf{sum_{le}}$ is determined by the matrices $M_0, M_1, \dots, M_{12}$ and the vectors $v$ and $w_1 + 2w_2$.

\bigskip

\begin{lemma}
\label{lelem}
For  $n = (13^{3r} - 1)/12$, where $r \in \mathbb{N}$, $\mathbf{sum_{le}}$$(n) = n + 3r$.
\end{lemma}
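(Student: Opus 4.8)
The plan is to follow the template of the three preceding lemmas in this section: read off the base-$13$ digit string of $n$, express $\mathbf{sum_{le}}(n)$ as a matrix product via the linear representation, identify the relevant minimal polynomial, and pin down the constants in the resulting closed form from a few values.

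First I would observe that $(13^{3r}-1)/12 = 1 + 13 + 13^2 + \cdots + 13^{3r-1}$, so the base-$13$ representation of $n$ in msd form is the all-ones string of length $3r$, that is, $(111)^r$. Since a linear representation of $\mathbf{sum_{le}}$ is given by $v$, the matrices $M_0,\dots,M_{12}$ and the vector $w_1 + 2w_2$, this gives
$$
\mathbf{sum_{le}}\bigl((13^{3r}-1)/12\bigr) \;=\; v\,M_1^{\,3r}\,(w_1 + 2w_2) \;=\; v\,\bigl(M_1^{3}\bigr)^{r}\,(w_1 + 2w_2),
$$
so only $M_1$, or rather $N := M_1^{3}$, enters the computation.

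Next I would compute, in SageMath, the minimal polynomial of $N = M_1^{3}$ and factor it. I expect $2197 = 13^{3}$ to occur as a simple root and $1$ as a double root (the double root at $1$ being responsible for the term linear in $r$), possibly together with a root at $0$ and/or further eigenvalues of smaller modulus. By the theory of linear recurrences, the scalar sequence $r \mapsto v\,N^{r}\,(w_1 + 2w_2)$ is a linear combination of the functions $r \mapsto \binom{r}{j}\lambda^{r}$ over the roots $\lambda$ of this polynomial, hence of the form
$$
\mathbf{sum_{le}}\bigl((13^{3r}-1)/12\bigr) \;=\; a + b\,r + c\,2197^{r} + (\text{contributions of the other roots}).
$$
I would then verify that the contributions of the remaining roots are killed by the projections $v\,(\cdot)$ and $(\cdot)\,(w_1+2w_2)$, so that the closed form is exactly $a + b\,r + c\,2197^{r}$. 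Solving the $3\times 3$ linear system obtained from $r = 0, 1, 2$ yields $a = -\tfrac{1}{12}$, $b = 3$, $c = \tfrac{1}{12}$ (with $r = 3$ as a consistency check), and therefore
$$
\mathbf{sum_{le}}\bigl((13^{3r}-1)/12\bigr) \;=\; \frac{2197^{r}-1}{12} + 3r \;=\; \frac{13^{3r}-1}{12} + 3r \;=\; n + 3r.
$$

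The matrix arithmetic and the solution of the linear system are routine. The delicate point — the main obstacle — is to be sure that the closed form involves nothing beyond the three functions $1$, $r$ and $2197^{r}$, i.e.\ that every extra eigenvalue of $N = M_1^{3}$ is annihilated by the left and right vectors of the linear representation. A clean way to settle this is to check that $v\,(N - I)^{2}(N - 2197\,I)\,N^{j}\,(w_1 + 2w_2) = 0$ for $j = 0, 1, \dots, 5$, which certifies that $(x-1)^{2}(x-2197)$ already annihilates the relevant scalar sequence, so that no other terms can appear.
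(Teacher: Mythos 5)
Your proposal is correct, and it reaches the lemma by a cleaner variant of the paper's method. The paper works with $M_1$ itself: its minimal polynomial factors as $(x-13)(x-1)(x-\theta)^2(x+\theta+1)^2$ with $\theta=(-1+\sqrt{-3})/2$ a primitive cube root of unity, so the paper writes $\mathbf{sum_{le}}((13^r-1)/12)=a+b\,13^r+(c+dr)\theta^r+(e+fr)(-1-\theta)^r$ for \emph{all} $r$, determines six constants from the six values $r=1,\dots,6$, and only at the end specialises to $r\mapsto 3r$ and uses $\theta^3=(-1-\theta)^3=1$ to collapse the complex terms. You instead cube the matrix first: since the eigenvalues of $N=M_1^3$ are $13^3=2197$ and $1$ (the cube-root-of-unity Jordan blocks of $M_1$ becoming unipotent blocks of $N$), the minimal polynomial of $N$ is $(x-2197)(x-1)^2$ and the scalar sequence is exactly $a+br+c\,2197^r$, pinned down by three data points; your constants $a=-1/12$, $b=3$, $c=1/12$ agree with the paper's values at $r=1,2$ (namely $186$ and $402240$). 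What you give up is the closed form for general $r$, which the lemma does not need; what you gain is staying entirely over the rationals, needing half as many interpolation points, and avoiding the $\theta$-arithmetic. Your proposed certificate $v\,(N-I)^2(N-2197I)\,N^j\,(w_1+2w_2)=0$ is a sound (indeed slightly stronger than necessary) way to rule out contributions from other eigenvalues, though here it is automatic because $(x-2197)(x-1)^2$ already annihilates $N$ itself.
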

\begin{proof}
The matrix $M_1$ has minimal polynomial 
$$
x^6 - 12x^5 - 12x^4 - 14x^3 + 12x^2 + 12x + 13 = (x - 13)  (x - 1)  (x - \theta)^2  (x + \theta + 1)^2
$$ 
where $\theta = (-1 + \sqrt{-3})/2$.
There are constants $a$, $b$ and $c$, $d$, $e$ and $f$ such that
$$
\mathbf{sum_{le}} ((13^r - 1)/12) = v* M_1 ^r *(w_1 + 2*w_2) = a + b \, 13^r + (c + dr)\theta^r + (e+fr)(-1-\theta)^r.
$$

We use the specific values 
\begin{align*}
\mathbf{sum_{le}}((13^1 - 1)/12) &= 1, \,\,\, \mathbf{sum_{le}}((13^2 - 1)/12) = 16\\ 
\mathbf{sum_{le}}((13^3 - 1)/12) &= 186, \,\,\, \mathbf{sum_{le}}((13^4 - 1)/12) = 2377\\ 
\mathbf{sum_{le}}((13^5 - 1)/12) &= 30943, \,\,\, \mathbf{sum_{le}}((13^6 - 1)/12) = 402240
\end{align*}
to deduce that
$$
a = 11/12, b = 1/12, c = (\theta - 1)/3, d = (\theta + 2)/3, e =  (-\theta - 2)/3, f =  (-\theta + 1)/3.
$$ 

Therefore, when $n =  (13^{3r}-1)/12$, 
\begin{align*}
\mathbf{sum_{le}}(n) =& (13^{3r}+11)/12 + (\theta - 1 + 3r\theta + 6r)\theta^{3r}/3 + (-\theta - 2 - 3r\theta + 3r)(-1-\theta)^{3r}/3\\
=& (13^{3r}+11)/12 + (\theta - 1 + 3r\theta + 6r -\theta - 2 - 3r\theta + 3r)/3\\
=& (13^{3r} - 1)/12 + 3r\\ 
=& n + 3r
\end{align*}
where we have used the fact that $\theta^3 = (-1-\theta)^3 = 1$.
\end{proof}

\bigskip

\begin{corollary}
The running sum of Leech's word is not $13$-synchronised.
\end{corollary}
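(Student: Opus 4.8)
The plan is to run the same argument as in Corollary~\ref{pdsumcor}. The identity map $n \mapsto n$ is trivially $13$-synchronised, so Theorem~\ref{shallit}(b) tells us that if $\mathbf{sum_{le}}$ were $13$-synchronised, then so would be $n \mapsto \mathbf{sum_{le}}(n) \dotdiv n$. The set $S = \{(13^{3r}-1)/12 : r \in \mathbb{N}\}$ is exactly the set of integers whose base-$13$ representation is a string of $1$'s of length divisible by $3$ (together with $0$), and is therefore recognised by a $13$-automaton; combining the previous function with the characteristic sequence of $S$ using first-order logic, as described in Section~\ref{bg}, would then yield a $13$-synchronised function
$$
g(n) =
\begin{cases}
  \mathbf{sum_{le}}(n) \dotdiv n, & \text{if } n \in S,\\
  0, & \text{otherwise.}
\end{cases}
$$

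Next I would evaluate $g$ on $S$. By Lemma~\ref{lelem}, for $n = (13^{3r}-1)/12 \in S$ we have $\mathbf{sum_{le}}(n) = n + 3r \geq n$, hence $g(n) = 3r$. Since $13^{3r} = 12n+1$, this means $g(n) = \log_{13}(12n+1) = \mathcal{O}(\log n) = \smallO(n)$, and $g$ is unbounded as $r \to \infty$. For $(13,13)$-synchronised sequences the exponent in Theorem~\ref{shallit2} is $\beta = (\log 13)/(\log 13) = 1$, so part (b) of that theorem would force any $13$-synchronised $g$ with $g(n) = \smallO(n)$ to satisfy $g(n) = \mathcal{O}(1)$ — contradicting the unboundedness of $g$. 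Therefore $\mathbf{sum_{le}}$ cannot be $13$-synchronised.

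Given Lemma~\ref{lelem}, this corollary is essentially routine; there is no real obstacle. The two points that deserve a moment's care are, first, checking that $\mathbf{sum_{le}}(n) \geq n$ on $S$ so that the monus actually records $3r$ rather than collapsing to $0$ (immediate from the lemma), and second, confirming that $S$ is a $13$-regular set so that the restriction step producing $g$ is legitimate (clear from the description of $S$ as strings of $1$'s of length divisible by $3$). The genuinely non-trivial work — finding a matrix product among $M_0, \dots, M_{12}$ whose minimal polynomial has a repeated non-zero root, here $M_1$ with its factors $(x-\theta)^2(x+\theta+1)^2$, and extracting the closed form $\mathbf{sum_{le}}(n) = n + 3r$ — has already been carried out in Lemma~\ref{lelem}.
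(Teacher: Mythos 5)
Your proposal is correct and follows essentially the same route as the paper: restrict $\mathbf{sum_{le}}(n) \dotdiv n$ to the $13$-regular set of integers with base-$13$ representation $(111)^r$, apply Lemma~\ref{lelem} to see the result equals $3r$, and invoke Theorems~\ref{shallit} and~\ref{shallit2} to derive a contradiction. The extra checks you flag (that $\mathbf{sum_{le}}(n) \geq n$ on $S$ so the monus does not collapse, and that $S$ is regular) are sensible and consistent with what the paper leaves implicit.
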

\begin{proof}
We define the function $g$ by
$$
g(n) =
\begin{cases}
    \mathbf{sum_{le}}(n) \dotdiv n , & \text{if } \, \, n  \text{ has base 13 representation } \,\, (111)^* \\
    0, & \text{otherwise}
\end{cases}
$$
If $\mathbf{sum_{le}}$ is synchronised, then so is $g$ by theorem~\ref{shallit}. But from lemma \ref{lelem}, $g = \mathcal{O}(\log n) = \smallO(n)$ and is unbounded, and so it cannot be a synchronised function by theorem~\ref{shallit2}. Hence, $\mathbf{sum_{le}}$ is not $13$-synchronised.
\end{proof}

\bigskip

\subsection{Baum-Sweet sequence}

The Baum-Sweet sequence,  $\bm{\mbox{bs}}$, satisfies
$$
\bm{\mbox{bs}} (n) =
\begin{cases}
0, & \text{if the base 2 representation of } \, \, n \,\, \text{has a block of zeros of odd length} \\
1, & \text{otherwise }
\end{cases}
$$
The running sum of $\bm{\mbox{bs}}$, $\mathbf{sum_{bs}}$ , has a linear representation given by the vectors  $v$ and $w$ and matrices $M_0$ and $M_1$ below:

\bigskip

\begin{align}
v  = \begin{pmatrix} 1, 0, 0, 0, 0, 0 \end{pmatrix}
\, \, \text{ and } \, \, 
w = \begin{pmatrix} 1, 1, 1, 1, 0, 0 \end{pmatrix}
\end{align}
\begin{align}
M_0  = \begin{pmatrix} 
1,0,0,0,0,0\\  0,1,0,1,0,0\\ 0,0,0,0,1,0\\ 0,0,0,1,0,1\\ 0,0,1,0,0,0\\ 0,0,0,1,0,0 \end{pmatrix}
\, \, \text{ and } \, \, 
M_1 = \begin{pmatrix}  
0,1,1,0,0,0\\ 0,1,0,1,0,0\\ 0,0,1,0,0,1\\ 0,0,0,1,0,1\\ 0,0,0,1,0,0\\ 0,0,0,1,0,0 \end{pmatrix}
\end{align}

\bigskip

$M_0$ has minimal polynomial $x^4 - x^3 - 2x^2 + x + 1 =  (x - \phi)  (x - 1)  (x - \psi)  (x + 1)$ where $\phi$ is the golden ration and $\psi$ is its conjugate.

\bigskip

\begin{theorem}
\label{bslem}
The running sum of $\bf bs$ is not 2-synchronised.
\end{theorem}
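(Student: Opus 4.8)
The plan is to follow the template of the period-doubling and paper-folding subsections: cut out a subsequence of $\mathbf{sum_{bs}}$ with a regular expression, show that its growth rate is too slow for a $2$-synchronised sequence, and conclude via Theorem~\ref{shallit2}. Since it is the minimal polynomial of $M_0$ (and not of some product of the $M_i$) that has been recorded, the subsequence to use is $n = 2^r$, $r \in \mathbb{N}$, whose base-$2$ representation in msd form is the regular language $1\,0^*$.

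First I would write $\mathbf{sum_{bs}}(2^r) = v\,M_1\,M_0^{\,r}\,w$. From the factorisation $x^4 - x^3 - 2x^2 + x + 1 = (x-\phi)(x-1)(x-\psi)(x+1)$ into four distinct linear factors, the theory of linear recurrences gives constants $A,B,C,D$ with
$$
\mathbf{sum_{bs}}(2^r) = A\,\phi^r + B + C\,\psi^r + D\,(-1)^r .
$$
The exact values of $A,B,C,D$ could be pinned down by evaluating $v\,M_1\,M_0^{\,r}\,w$ for $r = 0,1,2,3$, but this is not needed. What matters is that every root of the minimal polynomial has absolute value at most $\phi < 2$, so $\mathbf{sum_{bs}}(2^r) = \mathcal{O}(\phi^r) = \mathcal{O}\bigl((2^r)^{\log_2 \phi}\bigr)$ with $\log_2 \phi < 1$; and that $\mathbf{sum_{bs}}(2^r) \to \infty$ as $r \to \infty$ because $\bm{\mbox{bs}}$ has infinitely many $1$'s (for instance $\bm{\mbox{bs}}(2^k - 1) = 1$, the binary word $1^k$ having no block of zeros of odd length). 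Together these force $A > 0$ and $\mathbf{sum_{bs}}(2^r) \sim A\,\phi^r$.

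Next I would define
$$
g(n) =
\begin{cases}
\mathbf{sum_{bs}}(n), & \text{if the base-}2\text{ representation of }n\text{ matches }1\,0^*,\\
0, & \text{otherwise,}
\end{cases}
$$
and argue as in Corollary~\ref{pdsumcor}: the set $\{2^r : r \in \mathbb{N}\}$ is recognised by a finite automaton (its representations form the regular language $1\,0^*$), so by the restriction-to-an-automatic-set construction recalled in Section~\ref{bg}, $g$ is $2$-synchronised whenever $\mathbf{sum_{bs}}$ is. But then $g(n) = \smallO(n)$ (the value is $0$ off the subsequence and $\mathcal{O}(n^{\log_2\phi})$ on it) while $g$ is unbounded, which contradicts Theorem~\ref{shallit2}(b). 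Hence $\mathbf{sum_{bs}}$ is not $2$-synchronised.

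The main obstacle is purely a matter of bookkeeping: verifying that $v\,M_1\,M_0^{\,r}\,w$ genuinely computes $\mathbf{sum_{bs}}(2^r)$ under the msd reading convention for the stated linear representation, and that this quantity diverges (equivalently, that the leading coefficient $A$ is nonzero) — the latter being immediate from the fact that $\bm{\mbox{bs}}$ contains infinitely many $1$'s. The factorisation of the minimal polynomial, the closed form for the subsequence, and the final appeal to Theorem~\ref{shallit2} are routine and mirror the earlier subsections; in fact, since $1$'s have density $0$ in $\bm{\mbox{bs}}$ one could even bypass the restriction and argue that $\mathbf{sum_{bs}}(n) = \smallO(n)$ globally, but the subsequence argument keeps everything elementary and self-contained.
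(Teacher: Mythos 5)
Your proposal is correct and follows essentially the same route as the paper: evaluate $\mathbf{sum_{bs}}$ at $n=2^k$ via the linear representation, observe that the dominant root $\phi$ of the minimal polynomial of $M_0$ is less than $2$ so that $\mathbf{sum_{bs}}(2^k)\sim A\phi^k=\smallO(2^k)$ while remaining unbounded, and conclude by Theorem~\ref{shallit2}. The only (harmless) differences are that you wrap the subsequence in the restriction function $g$ as in Corollary~\ref{pdsumcor} instead of applying the growth bound to $\mathbf{sum_{bs}}$ directly, and you argue $A>0$ from unboundedness rather than computing the constants explicitly as the paper does.
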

\begin{proof}
The linear representation above shows that, for $k \in \mathbb{N}$, there are constants  $a, b, c, d$ such that $\mathbf{sum_{bs}}$$(2^k) = a + b (-1)^k +  c \phi^k  + d \psi^k$. The values of $(a, b, c, d)$ can be calculated, giving 
$$
\mathbf{sum_{bs}}(2^k) = 1/2 + (-1)^k/2  +  (3\sqrt{5}/10 + 1/2) \phi^k +  (-3\sqrt{5}/10 + 1/2) \psi^k .
$$

Since $\mathbf{sum_{bs}}$ is unbounded and $\mathbf{sum_{bs}}$$(2^k) \sim \phi^k  = \smallO(2^k)$, $\mathbf{sum_{bs}}$ cannot be a 2-synchronised function by theorem~\ref{shallit2}.
\end{proof}

\bigskip

\subsection{Stewart's choral sequence}

\bigskip

Stewart choral sequence, $\bm{\mbox{sc}} = s_0 s_1 s_2 \dots$, is a $3$-automatic sequence satisfying the recurrence relation
$$
s_{3n} = 0, \,\,\, s_{3n-1} = 1, \,\,\, s_{3n+1} = s_{n}.
$$
The running sum of $\bm{\mbox{sc}}$, $\mathbf{sum_{sc}}$, has a linear representation determined by vectors $v$ and $w$ and matrices $M_0$, $M_1$ and $M_2$.  $v$, $w$ and $M_1$ are given below:
\begin{align}
v  = \begin{pmatrix} 1,0,0,0 \end{pmatrix}
\, \, \text{ and } \, \, 
w = \begin{pmatrix} 0,0,1,1 \end{pmatrix}
\end{align}
\begin{align}
M_1 = \begin{pmatrix}  1,1,0,0\\ 0,2,0,1\\ 0,1,1,0\\ 0,1,0,2 \end{pmatrix}.
\end{align}

\bigskip

\begin{lemma}
\label{sclem}
Let $r \in  \mathbb{N}$. When $n = (3^{r} - 1)/2$, $\mathbf{sum_{sc}}$$(n) = (3^r - 2r - 1)/4$. 
\end{lemma}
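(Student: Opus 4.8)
The plan is to follow the same template as the preceding lemmas in this section. First I would observe that $(3^r - 1)/2 = 1 + 3 + 3^2 + \cdots + 3^{r-1}$, so in base $3$ the integer $n = (3^r-1)/2$ has the representation $1^r$, a block of $r$ ones. Since the automata read most significant digit first, the linear representation of $\mathbf{sum_{sc}}$ gives
$$
\mathbf{sum_{sc}}\bigl((3^r-1)/2\bigr) = v \, M_1^{\,r} \, w ,
$$
so only the powers of the single matrix $M_1$ matter; $M_0$ and $M_2$ play no role and need not be recorded.

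Next I would determine the minimal polynomial of $M_1$ (with SageMath, as elsewhere in the paper). A short computation gives characteristic polynomial $(x-1)^3(x-3)$, and since $M_1 - I$ has rank $2$ while $(M_1-I)^2$ has rank $1$, the minimal polynomial is
$$
(x-1)^2(x-3) = x^3 - 5x^2 + 7x - 3 .
$$
The essential point is the repeated root at $1$: the eigenvalue $1$ is defective, with a Jordan block of size $2$, and this is exactly what forces a term linear in $r$. By the theory of linear recurrences there are constants $a$, $b$, $c$ such that
$$
\mathbf{sum_{sc}}\bigl((3^r-1)/2\bigr) = a + br + c\,3^r .
$$
I would then evaluate the left-hand side for $r = 1, 2, 3$: reading off $s_0, s_1, \ldots$ directly from the recurrence $s_{3n}=0$, $s_{3n-1}=1$, $s_{3n+1}=s_n$ gives $\mathbf{sum_{sc}}(1) = 0$, $\mathbf{sum_{sc}}(4) = 1$, and $\mathbf{sum_{sc}}(13) = 5$. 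Solving the resulting $3 \times 3$ system yields $a = -1/4$, $b = -1/2$, $c = 1/4$, so that $\mathbf{sum_{sc}}\bigl((3^r-1)/2\bigr) = (3^r - 2r - 1)/4$; checking this against one further value of $r$ serves as a sanity test.

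The only step that carries any real risk is confirming that the eigenvalue $1$ of $M_1$ is genuinely defective — equivalently, that the minimal polynomial contains $(x-1)^2$ and not merely $(x-1)$. This is the fact that licenses the linear term $br$ in the ansatz; if $M_1$ were diagonalisable on the eigenvalue $1$ there would be no such term, the closed form would be different, and the non-synchronisation corollary that follows from this lemma would fail. Everything else — the base-$3$ expansion, forming the matrix power, and solving a small linear system — is routine.
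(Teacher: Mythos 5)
Your proposal is correct and follows essentially the same route as the paper: identify that $n=(3^r-1)/2$ has base-$3$ representation $1^r$ so that $\mathbf{sum_{sc}}(n)=v\,M_1^r\,w$, use the minimal polynomial $(x-3)(x-1)^2$ of $M_1$ to justify the ansatz $a+br+c\,3^r$, and fit the constants from initial values (your computed values $0$, $1$, $5$ and the resulting $a=-1/4$, $b=-1/2$, $c=1/4$ all check out). The extra justification you give for the repeated root being genuinely present in the minimal polynomial is a sound addition but not a departure from the paper's argument.
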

\begin{proof}
The matrix $M_1$ has minimal polynomial 
$$
x^3 - 5x^2 + 7x - 3 = (x - 3) (x - 1)^2.
$$ 

There are constants $a$, $b$ and $c$ such that

$$
\mathbf{sum_{sc}}((3^{r} - 1)/2) = v* M_1^r * w = a + br + c3^r.
$$

We then use specific values of $\mathbf{sum_{sc}}$$(n)$ to deduce that $a = -1/4$, $b = -1/2$ and $c = 1/4$. Therefore,

$$
\mathbf{sum_{sc}} ((3^{r} - 1)/2) = -1/4 - r/2 + 3^r/4 =  (3^r - 2r - 1)/4.
$$                                         
\end{proof}

\bigskip

\begin{corollary}
The running sum of Stewart's choral sequence is not 3-synchronised. 
\end{corollary}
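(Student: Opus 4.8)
The plan is to follow the template already used for Corollaries \ref{pdsumcor} and \ref{mwsumcor}. By Lemma \ref{sclem}, for every $r \in \mathbb{N}$ and $n = (3^r-1)/2$ we have $\mathbf{sum_{sc}}(n) = (3^r - 2r - 1)/4$. On this subsequence $3^r = 2n+1$, so this rewrites as $\mathbf{sum_{sc}}(n) = (n-r)/2$; in particular $n-r$ is even here, so the formula is genuinely integer-valued. A short case split on the parity of $n$ (equivalently of $r$, since $3^r \equiv 3 \pmod 4$ or $3^r \equiv 1 \pmod 4$ according as $r$ is odd or even) then shows that $\lfloor n/2 \rfloor - \mathbf{sum_{sc}}(n) = \lfloor r/2 \rfloor$ on this subsequence, and that $\lfloor n/2 \rfloor \geq \mathbf{sum_{sc}}(n)$ there, so the ordinary difference coincides with the monus $\lfloor n/2 \rfloor \dotdiv \mathbf{sum_{sc}}(n)$.

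Next I would introduce the auxiliary function
$$
g(n) = \begin{cases} \lfloor n/2 \rfloor \dotdiv \mathbf{sum_{sc}}(n), & \text{if } n = (3^r-1)/2 \text{ for some } r \in \mathbb{N}, \\ 0, & \text{otherwise.} \end{cases}
$$
The index set $\{(3^r-1)/2 : r \in \mathbb{N}\}$ is exactly the set of integers whose base-$3$ representation matches $0^* 1^*$, hence is recognised by a $3$-automaton. The map $n \mapsto n$ is trivially $(3,3)$-synchronised, so $n \mapsto \lfloor n/2 \rfloor$ is synchronised by Theorem \ref{shallit}(d) (with $\alpha = 1/2$). Therefore, if $\mathbf{sum_{sc}}$ were $3$-synchronised, then $\lfloor n/2 \rfloor \dotdiv \mathbf{sum_{sc}}(n)$ would be synchronised by Theorem \ref{shallit}(b), and restricting it to the regular index set above — via the construction recalled in Section \ref{bg} for combining a synchronised function with an automatic sequence — would make $g$ synchronised.

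Finally, by the first paragraph $g$ takes the value $\lfloor r/2 \rfloor$ at $n = (3^r - 1)/2$ and is $0$ elsewhere, so $g$ is unbounded; and since $r = \log_3(2n+1)$ on the index set, $g(n) = \mathcal{O}(\log n) = \smallO(n)$. This contradicts Theorem \ref{shallit2}(b) (applied with $k = l = 3$, so $\beta = 1$), which forces any $\smallO(n)$ synchronised sequence to be bounded. Hence $\mathbf{sum_{sc}}$ is not $3$-synchronised. The only mildly delicate point is the bookkeeping in the first paragraph — rewriting Lemma \ref{sclem} in a form affine in $n$, checking integrality and the floor-function arithmetic, and confirming that $0^*1^*$ really picks out the index set in base $3$ — but this is routine; the substantive content is all in Lemma \ref{sclem}.
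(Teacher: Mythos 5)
Your proposal is correct and is essentially the argument the paper intends: its proof of this corollary simply says ``use the same argument as for corollaries \ref{pfsumcor}, \ref{pdsumcor} and theorem \ref{bslem}'', and you have carried out exactly that template (monus against $\lfloor n/2\rfloor$, restriction to the regular index set $1^r$ in base $3$, then the growth contradiction via Theorems \ref{shallit} and \ref{shallit2}). The extra bookkeeping you supply --- rewriting Lemma \ref{sclem} as $(n-r)/2$, the parity check, and identifying the index set with $0^*1^*$ --- is correct and only makes explicit what the paper leaves implicit.
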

\begin{proof}
Use the same argument as for corollaries \ref{pfsumcor}, \ref{pdsumcor} and theorem \ref{bslem}.
\end{proof}

\bigskip

\subsection{Rudin-Shapiro sequence}
\label{rs}

The Rudin-Shapiro sequence,  $\bm{\mbox{rs}}$, is a $2$-automatic sequence satisfying
\begin{align*}
\bm{\mbox{rs}} &(0) = \bm{\mbox{rs}} (1) = 0\\
\bm{\mbox{rs}} &(2n)  = \bm{\mbox{rs}} (n)  \\
\bm{\mbox{rs}} &(4n + 1) = \bm{\mbox{rs}} (n) \\
\bm{\mbox{rs}} &(4n + 3)  = 1 - \bm{\mbox{rs}} (2n+1).
\end{align*}

\bigskip

\begin{theorem}
The running sum of $\bm{\mbox{rs}}$, $\mathbf{sum_{rs}}$, is not $2$-synchronised. 
\end{theorem}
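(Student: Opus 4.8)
The plan is to follow the same template used for the period-doubling, Mephisto Waltz, paper-folding, Leech, Baum-Sweet, and Stewart sequences: find a regular family of inputs $n$ along which $\mathbf{sum_{rs}}(n)$ can be computed in closed form from a linear representation, and show that the closed form, after subtracting a synchronised ``main term,'' leaves an unbounded but $\smallO(n)$ residue, contradicting Theorem~\ref{shallit2}. First I would obtain a linear representation for $\mathbf{sum_{rs}}$ from \texttt{Walnut} via a command of the form \texttt{eval rssumrep n "?msd\_2 (j>=0) \& (j<=n) \& RS[j]=@1"}, giving vectors $v$, $w$ and matrices $M_0$, $M_1$. Then I would search among products of $M_0$ and $M_1$ (e.g. $M_1M_0$, $M_1M_1$, $M_0M_1$, or a length-three product) for one whose minimal polynomial has a repeated nonzero root; the existence of such a product is exactly the mechanism that forces a term of the shape $(\alpha + \beta r)\lambda^r$ with $\lambda$ a small root, which in turn produces the polynomial-in-$r$ slack needed to beat linear growth.

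Assuming the repeated root occurs for, say, $M_1M_0$ at the eigenvalue $1$ (as happened for the period-doubling and paper-folding cases, whose matrices even shared the polynomial $(x-4)x(x-1)^2$), the input family would be $n$ with binary expansion $(10)^r1$, i.e. $n=(4^{r+1}-1)/3$, and the theory of linear recurrences gives $\mathbf{sum_{rs}}((4^{r+1}-1)/3) = a + br + c\,4^r$ for constants $a,b,c$ determined by plugging in the first few values of $r$. I would then state a lemma giving this closed form explicitly, and in the corollary define
$$
g(n) =
\begin{cases}
  \mathbf{sum_{rs}}(n) \dotdiv \lfloor \gamma n \rfloor, & \text{if } n = (4^{r+1}-1)/3 \text{ for some } r,\\
  0, & \text{otherwise,}
\end{cases}
$$
with $\gamma$ the rational number such that $\lfloor \gamma n \rfloor$ cancels the $c\,4^r$ term (here $\gamma = 3c/4$, since $n \sim 4^{r+1}/3$). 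By Theorem~\ref{shallit}, $\lfloor \gamma n\rfloor$ is synchronised and, if $\mathbf{sum_{rs}}$ were synchronised, so would be the monus and hence $g$ (the index set being regular). But $g(n)$ would then be a linear-or-logarithmic function of $r = \Theta(\log n)$, so $g(n) = \BigO{\log n} = \smallO(n)$ while still unbounded, contradicting Theorem~\ref{shallit2}(b).

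The main obstacle I anticipate is not conceptual but computational and combinatorial: the Rudin-Shapiro running sum may require a larger linear representation than the earlier examples, and it is not a priori guaranteed that a \emph{short} product of $M_0,M_1$ has a repeated nonzero root — one may have to hunt through products of length two or three, and the arithmetic of the relevant input family (e.g. whether $(10)^r1$, or $1^r$ giving $2^r-1$, or $(110)^r$ etc. is the right choice) depends on which product works. A secondary subtlety is that Rudin-Shapiro counts $\{0,1\}$-valued entries, so $\mathbf{sum_{rs}}$ is literally the running count of $1$'s and its growth is genuinely linear (roughly $n/2$) in general; the whole argument hinges on the repeated root being a \emph{small} eigenvalue (modulus $<2$) rather than the dominant one, so that the correction term $br$ is what survives after subtracting the synchronised affine main term. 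Once the right product and input family are pinned down, the remaining steps — solving a $3\times 3$ linear system for $a,b,c$ and checking the monus identity — are routine.
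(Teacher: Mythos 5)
There is a genuine gap, and it sits exactly where you flagged the main risk. Your template (find a matrix product with a repeated non-zero root, extract a $br$ term, subtract a synchronised main term) is the one the paper uses for the period-doubling, Mephisto Waltz, paper-folding, Leech and Stewart sequences, but it is not how the Rudin--Shapiro case actually works. The paper uses the single matrix $M_1$, i.e.\ the family $n=2^k-1$ with binary expansion $1^k$, and the minimal polynomial of $M_1$ is $(x-2)(x-1)\,x\,(x+1)(x^2-2)$: every root is \emph{simple}, so there is no repeated non-zero root and no $br$ term anywhere. What drives the argument is instead the pair of simple eigenvalues $\pm\sqrt2$, of modulus strictly between $1$ and the dominant root $2$: one gets $\mathbf{sum_{rs}}(2^k-1)=2^{k-1}-2^{k/2-1}$ for $k$ even and $2^{k-1}-2^{(k-1)/2}$ for $k$ odd, so the deviation from $n/2$ is $\Theta(\sqrt n)$, not $\Theta(\log n)$. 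Your search criterion would send you hunting for a repeated root that need not exist among short products, and your proposed family $(10)^r1$ with ansatz $a+br+c\,4^r$ is an unsupported guess. The repair is to broaden the criterion: any simple sub-dominant eigenvalue of modulus greater than $1$ already yields an unbounded $\smallO(n)$ residue, and indeed $g(n)=\lfloor n/2\rfloor \dotdiv \mathbf{sum_{rs}}(n)$ restricted to $n=2^k-1$ grows like $\sqrt n/2$, which contradicts theorem~\ref{shallit2}(b) exactly as you intend.

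A secondary divergence: the paper does not finish by the monus/growth-rate route at all. Having computed the closed form, it reads off the base-$2$ representation of $\mathbf{sum_{rs}}(2^k-1)$, namely $1^{k/2}0^{k/2-1}$ or $1^{(k-1)/2}0^{(k-1)/2}$, and applies the pumping lemma directly to a hypothetical synchronising automaton accepting the pairs $(1^k,\mathbf{sum_{rs}}(2^k-1))$. Both endgames are legitimate, and yours would be more uniform with the rest of the paper once corrected; but as written your proposal establishes nothing until the linear representation is actually computed and the correct eigenvalue structure (simple $\pm\sqrt2$, not a repeated root at $1$) is identified.
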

\begin{proof}
{\tt Walnut} provides the following linear representation for $\mathbf{sum_{rs}}$:

\bigskip

\begin{align}
v  = \begin{pmatrix} 1,0,0,0,0,0,0,0 \end{pmatrix}
\, \, \text{ and } \, \, 
w = \begin{pmatrix} 0,0,0,0,1,1,1,1 \end{pmatrix}
\end{align}
\begin{align}
M_0  = \begin{pmatrix}  1,0,0,0,0,0,0,0\\ 0,1,0,1,0,0,0,0\\ 1,0,0,0,0,0,0,0\\ 0,1,0,0,0,1,0,0\\ 0,0,0,0,0,0,1,0\\ 0,0,0,1,0,0,0,1\\ 0,0,0,0,0,0,1,0\\ 0,0,0,0,0,1,0,1 \end{pmatrix}
\, \, \text{ and } \, \, 
M_1 = \begin{pmatrix}  
0,1,1,0,0,0,0,0\\ 0,1,0,1,0,0,0,0\\ 0,1,0,0,1,0,0,0\\ 0,1,0,0,0,1,0,0\\ 0,0,1,0,0,0,0,1\\ 0,0,0,1,0,0,0,1\\ 0,0,0,0,1,0,0,1\\ 0,0,0,0,0,1,0,1 \end{pmatrix}
\end{align}

\bigskip

The minimal polynomial for $M_1$ is 
$$
x^6 - 2x^5 - 3x^4 + 6x^3 + 2x^2 - 4x = (x - 2)  (x - 1)  x  (x + 1)  (x^2 - 2).
$$ 
Therefore, there are constants $a, b, c, d, e$ such that
$$
\mathbf{sum_{rs}} (2^k - 1) =  v*M_1^k*w = a + b (-1)^k +  c 2^k  + d (\sqrt{2})^k + e (-\sqrt{2})^k.
$$

The constants $(a, b, c, d, e) = (0, 0, 1/2, -(\sqrt{2} + 1)/4, (\sqrt{2} - 1)/4)$, so
\begin{equation}
\label{rseq}
\mathbf{sum_{rs}}(2^k - 1) = 
\begin{cases}
2^{k-1} - 2^{k/2 - 1}, & \text{if k is even} \\
2^{k-1} - 2^{(k-1)/2}, & \text{if k is odd.}
\end{cases}
\end{equation}

\bigskip

From equation (\ref{rseq}), the base $2$ representation of $\mathbf{sum_{rs}}$$(2^k - 1)$ is given by $1^{k/2} 0^{k/2 - 1}$ when $k$ is even and by $1^{(k-1)/2} 0^{(k-1)/2}$ when $k$ is odd. The pumping lemma then says that an automaton which accepts pairs $(2^k - 1, \mathbf{sum_{rs}}(2^k - 1))$, for $k$ large enough, must also accept base $2$ formatted pairs $(1^{k+m}, 1^{m + k/2} 0^{k/2 - 1})$ or $(1^{k+m}, 1^{m + (k-1)/2} 0^{(k-1)/2})$ where m is some non-zero integer. This contradicts the value of $\mathbf{sum_{rs}}$$(2^{k+m} - 1)$ given in equation (\ref{rseq}) and so the running sum of the Rudin-Shapiro sequence is not $2$-synchronised.
\end{proof}

\bigskip

\subsection{Fibonacci-Thue-Morse sequence}	
\label{ftm}

The Fibonacci-Thue-Morse sequence,  $\bm{\mbox{ftm}}$, satisfies
$$
\bm{\mbox{ftm}} (n) =
\begin{cases}
0 & \text{if the Fibonacci representation of } \, \, n \,\, \text{has an even number of 1's} \\
1 & \text{if the Fibonacci representation of } \, \, n \,\, \text{has an odd number of 1's.}
\end{cases}
$$
In order to show that the running sum of $\bm{\mbox{ftm}}$, $\mathbf{sum_{ftm}}$, is not Fibonacci-synchronised, we look at the values of $\mathbf{sum_{ftm}}$$(n)$ when $n = n(r)$ has the Fibonacci representation  $(100 100)^r = (100)^{2r}$. {\tt Walnut} provides the following linear representation for $\mathbf{sum_{ftm}}$:

\bigskip

\begin{align}
v  = \begin{pmatrix} 1, 0, 0, 0, 0, 0, 0, 0, 0, 0, 0, 0 \end{pmatrix}
\, \, \text{ and } \, \, 
w = \begin{pmatrix} 0, 0, 1, 0, 1, 1, 1, 1, 1, 0, 0, 0 \end{pmatrix}
\end{align}
\begin{align}
M_0  = \begin{pmatrix}  1,0,0,0,0,0,0,0,0,0,0,0\\  0,0,0,1,1,0,0,0,0,0,0,0\\ 0,0,0,0,0,1,0,0,0,0,0,0\\ 0,0,0,1,1,0,0,0,0,0,0,0\\ 0,0,0,0,0,0,0,1,0,0,0,0\\ 0,0,0,0,0,1,0,0,0,0,0,0\\ 0,0,0,0,0,0,0,1,0,0,0,0\\ 0,0,0,0,0,0,0,1,0,0,1,0\\ 0,0,0,0,0,0,0,1,0,0,1,0\\ 1,0,0,0,0,0,0,0,0,0,0,0\\ 0,0,0,1,0,0,0,0,0,0,0,0\\ 0,0,0,1,0,0,0,0,0,0,0,0 \end{pmatrix}
\, \, \text{ and } \, \, 
M_1 = \begin{pmatrix}  0,1,1,0,0,0,0,0,0,0,0,0\\ 0,0,0,0,0,0,0,0,0,0,0,0\\ 0,0,0,0,0,0,0,0,0,0,0,0\\ 0,1,0,0,0,0,1,0,0,0,0,0\\ 0,0,0,0,0,0,0,0,1,0,0,0\\ 0,0,0,0,0,0,0,0,1,1,0,0\\ 0,0,0,0,0,0,0,0,0,0,0,0\\ 0,0,0,0,0,0,0,0,1,0,0,1\\ 0,0,0,0,0,0,0,0,0,0,0,0\\ 0,0,0,0,0,0,0,0,0,0,0,0\\ 0,1,0,0,0,0,0,0,0,0,0,0\\ 0,0,0,0,0,0,0,0,0,0,0,0 \end{pmatrix}
\end{align}

\bigskip

\begin{lemma}
\label{fibpoly}
\begin{enumerate}[label=(\roman*)]
    \item $F_{6r+14} - 18F_{6r+8} + F_{6r+2} = 0$ for all $r \in  \mathbb{N}$.
    \item $F_{6r+14} - 6F_{6r+8} - 16F_{6r+7} -16F_{6r+4} +5F_{6r+2} = 0$ for all $r \in  \mathbb{N}$.
\end{enumerate}
\end{lemma}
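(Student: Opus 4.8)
The plan is to derive both identities from Binet's formula (\ref{binet}). If $P(x)=\sum_j c_j x^{a_j}$ is any polynomial with $P(\phi)=P(\psi)=0$, then for every $n$
$\sum_j c_j F_{n+a_j}=\bigl(\phi^n P(\phi)-\psi^n P(\psi)\bigr)/\sqrt5=0$,
so each claimed Fibonacci identity reduces to checking that an explicit polynomial vanishes at $\phi$ and at $\psi$. Since $\phi$ and $\psi$ are the two roots of $x^2-x-1$, every power $x^k$ reduces, using $x^2=x+1$ repeatedly, to a linear polynomial $\alpha_k x+\beta_k$, and the reduction is valid simultaneously for $x=\phi$ and $x=\psi$. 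I will record the reductions I need: $x^2=x+1$, $x^5=5x+3$, $x^6=8x+5$, and $x^{12}=(8x+5)^2=144x+89$.

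\textbf{Part (i).} First I note that $\phi^6+\psi^6=(8\phi+5)+(8\psi+5)=8(\phi+\psi)+10=18$ (using $\phi+\psi=1$) and $\phi^6\psi^6=(\phi\psi)^6=(-1)^6=1$, so $\phi^6$ and $\psi^6$ are the two roots of $t^2-18t+1=0$. Hence $\phi^{12}=18\phi^6-1$ and $\psi^{12}=18\psi^6-1$; multiplying the first by $\phi^{6r+2}$ and the second by $\psi^{6r+2}$, subtracting, and dividing by $\sqrt5$ yields $F_{6r+14}=18F_{6r+8}-F_{6r+2}$, which is (i). Equivalently, $x^{12}-18x^6+1$ factors through $t^2-18t+1$ under $t=x^6$ and so vanishes at $\phi,\psi$ (one checks $144\phi+89-18(8\phi+5)+1=0$ directly); a third packaging is Cayley--Hamilton applied to the sixth power of the matrix with rows $(1,1)$ and $(1,0)$, whose characteristic polynomial is $t^2-18t+1$.

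\textbf{Part (ii).} I will shrink this identity by subtracting the content of (i). Replacing $F_{6r+14}$ by $18F_{6r+8}-F_{6r+2}$ turns the left side of (ii) into $12F_{6r+8}-16F_{6r+7}-16F_{6r+4}+4F_{6r+2}$, so (ii) is equivalent to $3F_{6r+8}-4F_{6r+7}-4F_{6r+4}+F_{6r+2}=0$, i.e.\ to the all-$n$ identity $3F_{n+6}-4F_{n+5}-4F_{n+2}+F_n=0$ at $n=6r+2$. By the first paragraph this follows from $3x^6-4x^5-4x^2+1=0$ at $x=\phi,\psi$: substituting $x^6=8x+5$, $x^5=5x+3$, $x^2=x+1$ gives $3(8x+5)-4(5x+3)-4(x+1)+1=0\cdot x+0$. (If one prefers not to use (i), the same reduction applied directly to $x^{12}-6x^6-16x^5-16x^2+5$, with $x^{12}=144x+89$, also collapses to $0$.)

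\textbf{Main obstacle.} There is no genuinely hard step; the only thing requiring care is the arithmetic of the power reductions and the observation that each step uses only $x^2=x+1$, so the polynomial identity legitimately transfers to a Fibonacci identity for all $n$. It is worth including an $r=0$ sanity check against index slips, e.g.\ $F_{14}-18F_8+F_2=377-378+1=0$ and $F_{14}-6F_8-16F_7-16F_4+5F_2=377-126-208-48+5=0$.
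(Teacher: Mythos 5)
Your proposal is correct and follows essentially the same route as the paper: both arguments reduce each identity, via Binet's formula, to checking that $x^2-x-1$ divides $x^{12}-18x^6+1$ and $x^{12}-6x^6-16x^5-16x^2+5$ respectively (equivalently, that these polynomials vanish at $\phi$ and $\psi$). You simply carry out the divisibility check explicitly via the reductions $x^2=x+1$, $x^5=5x+3$, $x^6=8x+5$, $x^{12}=144x+89$, which the paper leaves to the reader.
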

\begin{proof}
The first statement follows from Binet's formula (\ref{binet}) and the fact that the polynomial $x^2 - x - 1$ is a factor of $x^{12} - 18x^6 + 1$. The second statement follows from Binet's formula and the fact that  $x^2 - x - 1$ is a factor of $x^{12} - 6x^6 - 16x^5 - 16x^2 + 5$.
\end{proof}

\bigskip

\begin{lemma}
\label{fib1}
Let $n = n(r)$ have Fibonacci representation $(100 100)^r$. Then, 

$\mathbf{sum_{ftm}}$$(n) = a(r) + b(r)$,  where $a(r) = (F_{6r+2} - 1)/4$ and 

$b(r) = (F_{6r+8} - 13 F_{6r+2} - 32r - 8)/32$.
\end{lemma}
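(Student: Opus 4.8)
The plan is to follow the linear-representation method used in the preceding subsections. Since $n=n(r)$ has Fibonacci representation $(100)^{2r}$ and the automata read most-significant-digit first, the representation $(v,w,M_0,M_1)$ above gives $\mathbf{sum_{ftm}}(n(r)) = v(M_1M_0^2)^{2r}w = v(M_1M_0^2M_1M_0^2)^{r}w$. Write $N:=M_1M_0^2M_1M_0^2$. First I would form $N$ explicitly and compute its minimal polynomial (as well as those of $M_0$ and $M_1$) with SageMath, exactly as was done in the period-doubling, Mephisto-Waltz, paper-folding, Leech and Stewart cases; Lemma~\ref{fibpoly} records the divisibility properties of the polynomials that will be needed. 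By the theory of linear recurrences the scalar sequence $r\mapsto vN^{r}w$ is then a fixed linear combination of terms $r^{j}\lambda^{r}$, as $\lambda$ runs over the roots of the minimal polynomial of $N$ and $j$ stays below the multiplicity of $\lambda$.

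The second step is to identify which of these modes actually occur. Since $N$ processes a block of six Fibonacci digits, the eigenvalue of $N$ seen from $v$ and $w$ that dominates is $\phi^{6}$, and I expect the only surviving modes to be $(\phi^{6})^{r}$ and $(\psi^{6})^{r}$ together with a double mode at $1$, so that the sequence obeys the order-four recurrence with characteristic polynomial $(x^{2}-18x+1)(x-1)^{2}=(x-\phi^{6})(x-\psi^{6})(x-1)^{2}$. Lemma~\ref{fibpoly}(i) is precisely the statement that $F_{6r+2}$, $F_{6r+8}$ (and $F_{6r+14}$) satisfy the recurrence with characteristic polynomial $x^{2}-18x+1$; since $F_{6r+2}$ and $F_{6r+8}$ are linearly independent, they span its two-dimensional solution space, so the geometric part of the closed form can be rewritten as a rational combination of $F_{6r+2}$ and $F_{6r+8}$, while Lemma~\ref{fibpoly}(ii) furnishes the companion divisibility that certifies no further eigenvalue of $N$ leaks into $vN^{r}w$. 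Hence $\mathbf{sum_{ftm}}(n(r)) = \alpha + \beta r + \gamma F_{6r+2} + \delta F_{6r+8}$ for constants $\alpha,\beta,\gamma,\delta$.

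I would then pin down the four constants by evaluating $\mathbf{sum_{ftm}}(n(r))$ for $r=0,1,2,3$ — for instance $\mathbf{sum_{ftm}}(n(0))=\mathbf{sum_{ftm}}(0)=0$ and $\mathbf{sum_{ftm}}(n(1))=\mathbf{sum_{ftm}}(16)=7$ — directly from the linear representation, and solving the resulting $4\times4$ linear system, as in the earlier lemmas. Collecting coefficients, the outcome must have $F_{6r+8}$-coefficient $\tfrac{1}{32}$, $F_{6r+2}$-coefficient $\tfrac{1}{4}-\tfrac{13}{32}=-\tfrac{5}{32}$, coefficient of $r$ equal to $-1$ and constant $-\tfrac{1}{4}-\tfrac{1}{4}=-\tfrac{1}{2}$, which is exactly the decomposition $a(r)+b(r)$ as stated. (As a consistency check one may use the identities $F_{6r+8}=13F_{6r+2}+8F_{6r+1}$ and $n(r)=(F_{6r+3}-2)/2$, under which $a(r)+b(r)$ collapses to the equivalent compact form $\mathbf{sum_{ftm}}(n(r))=n(r)/2-r$.)

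The step I expect to be the main obstacle is the second one: verifying that the minimal polynomial of the twelve-dimensional matrix $N$ really contributes only the four modes $\phi^{6r}$, $\psi^{6r}$, $r$ and $1$ to $vN^{r}w$ — that every other eigenvalue in its spectrum enters with coefficient zero — and then translating cleanly from the basis $(\phi^{6})^{r},(\psi^{6})^{r}$ to $F_{6r+2},F_{6r+8}$ without an index or sign slip. This is exactly what Lemma~\ref{fibpoly} is designed to make routine; the remaining work — forming $N$, factoring minimal polynomials, evaluating four values, and matching coefficients — is a finite computation.
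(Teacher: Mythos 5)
Your proposal is correct in outline and its arithmetic is consistent with the statement, but it takes a genuinely different route from the paper. The paper does not pass through the minimal polynomial of $N=A^2$ (where $A=M_1M_0^2$) at all: it writes $A^2$ out explicitly and proves by induction on $r$ that the full row vector $vA^{2r}$ equals $(1,0,0,a(r),b(r),0,0,a(r),0,0,b(r)+2r,0)$, the inductive step being a direct multiplication by $A^2$ in which the identity $F_{6r+14}=18F_{6r+8}-F_{6r+2}$ (Lemma~\ref{fibpoly}(i)) closes the recursion for $b$; dotting with $w$ then yields $a(r)+b(r)$. This sidesteps the step you yourself flag as the main obstacle, namely certifying that only the modes $\phi^{6r}$, $\psi^{6r}$, $r$ and $1$ survive in $vN^{r}w$, which for a $12\times 12$ matrix is exactly the delicate part. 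Your spectral/interpolation route is the one the paper uses in its other subsections and would also work, but two points need repair. First, fitting constants from the four values $r=0,1,2,3$ proves the closed form for all $r$ only after you know that $vN^{r}w$ satisfies an order-four recurrence; until the minimal polynomial of $N$ (or of its restriction to the orbit of $v$) is actually computed and shown to contribute no further modes, you need as many consecutive data points as the degree of the least common multiple of that polynomial with $(x^{2}-18x+1)(x-1)^{2}$. Second, Lemma~\ref{fibpoly}(ii) plays no role in ruling out extra eigenvalues here; it is used only in Lemma~\ref{fib2} to handle $\lfloor n/2\rfloor$, so your appeal to it for ``non-leakage'' is misplaced. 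What the induction buys the paper is that no spectral information about the $12\times 12$ matrix is needed beyond one explicit matrix power; what your approach buys is uniformity with the method of the rest of section~\ref{unsync}.
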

\begin{proof}
Let $A = M_1 * M_0 * M_0$. Then,  

\bigskip

\begin{align}
\label{A2}
A^2  = \begin{pmatrix} 1, 0, 0, 5, 2, 0, 0, 5, 0, 0, 4, 0\\ 0,0,0,0,0,0,0,0,0,0,0,0\\ 0,0,0,0,0,0,0,0,0,0,0,0\\ 0, 0, 0, 7, 4, 0, 0, 6, 0, 0, 4, 0\\ 0, 0, 0, 4, 3, 0, 0, 4, 0, 0, 2, 0\\ 0, 0, 0, 5, 4, 1, 0, 5, 0, 0, 2, 0\\ 0,0,0,0,0,0,0,0,0,0,0,0 \\ 0, 0, 0, 6, 4, 0, 0, 7, 0, 0, 4, 0\\ 0,0,0,0,0,0,0,0,0,0,0,0\\ 0,0,0,0,0,0,0,0,0,0,0,0\\ 0, 0, 0, 4, 2, 0, 0, 4, 0, 0, 3, 0\\ 0,0,0,0,0,0,0,0,0,0,0,0  \end{pmatrix}
\end{align}

\bigskip

Using the linear representation for $\mathbf{sum_{ftm}}$, we prove by induction that
$$
v*(A^{2r}) = (1, 0, 0, a(r), b(r), 0, 0, a(r), 0, 0, b(r) + 2r, 0)
$$
where the functions $a(r)$ and $b(r)$ are given in the statement of the lemma. We have
\begin{align*}
&v*(A^{2r+2})  =  (1, 0, 0, a(r), b(r), 0, 0, a(r), 0, 0, b(r) + 2r, 0)*A^2 \\
&= (1, 0, 0, 5 + 7a(r) + 4b(r) + 6a(r) + 4b(r) + 8r, 2 + 4a(r) + 3b(r) + 4a(r) + 2b(r) + 4r, \\
&0, 0, 5 + 6a(r) + 4b(r) + 7a(r) + 4b(r) + 8r, 0, 0, 4 + 4a(r) + 2b(r) + 4a(r) + 3b(r) + 6r ) \\
&= (1, 0, 0, 5 + 13a(r) + 8b(r) + 8r, 2 + 8a(r) + 5b(r) + 4r,  0, 0, 5 + 13a(r) + 8b(r) + 8r, \\
&0, 0, 4 + 8a(r) + 5b(r) + 6r ) 
\end{align*}
From the definitions of $a(r)$ and $b(r)$, we have:
\begin{align*}
5 + 13a(r)& + 8b(r) + 8r \\
                & = 5 + 13( (F_{6r+2} - 1)/4) +  (F_{6r+8} - 13 F_{6r+2} - 32r - 8)/4 +8r \\
                & = (F_{6r+8}  - 1)/4 \\
                & = a(r+1).
\end{align*}

Again using the definitions of $a(r)$ and $b(r)$, we have:
\begin{align*}
8a(r) + &5b(r) + 4r + 2 \\
            & = (5 F_{6r+8} - F_{6r+2} - 32r - 40)/32 \\
            & = (F_{6r+14} - 13 F_{6r+8} - 32r - 40)/32 \\
            & = b(r+1)
\end{align*}
where the second last step follows from lemma \ref{fibpoly}. This concludes the inductive proof.
\end{proof}

\bigskip

\begin{lemma}
\label{fib2}
Let $n = n(r)$ have Fibonacci representation $(100 100)^r$. Then, 

$\lfloor n/2 \rfloor = a(r) + b(r) + r$, where $a(r) = (F_{6r+2} - 1)/4$ and 

$b(r) = (F_{6r+8} - 13 F_{6r+2} - 32r - 8)/32$.
\end{lemma}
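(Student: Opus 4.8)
\section*{Proof proposal for Lemma \ref{fib2}}

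The plan is to compute $n = n(r)$ in closed form directly from its Fibonacci representation, observe that it is even, and then reduce the claimed identity to a single Fibonacci identity. In Walnut's Fibonacci numeration (most significant digit first), a digit $1$ in position $p$, counting from the right starting at $0$, contributes $F_{p+2}$. The length-$6r$ string $(100)^{2r}$ places a $1$ exactly in positions $2,5,8,\dots,6r-1$, so
$$
n(r) = \sum_{k=1}^{2r} F_{3k+1}.
$$
Using $F_{m+3} - F_m = (F_{m+2}+F_{m+1}) - (F_{m+2}-F_{m+1}) = 2F_{m+1}$ with $m = 3k$, the sum telescopes:
$$
n(r) = \frac12 \sum_{k=1}^{2r} \bigl(F_{3k+3} - F_{3k}\bigr) = \frac{F_{6r+3} - F_3}{2} = \frac{F_{6r+3} - 2}{2}.
$$

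Next I would record that $n(r)$ is even. The Fibonacci sequence is periodic modulo $4$ with period $6$, the cycle being $0,1,1,2,3,1$, and $6r+3 \equiv 3 \pmod 6$, so $F_{6r+3} \equiv 2 \pmod 4$; hence $(F_{6r+3}-2)/2$ is even and $\lfloor n(r)/2 \rfloor = (F_{6r+3}-2)/4$. It then remains to verify $(F_{6r+3}-2)/4 = a(r) + b(r) + r$. Substituting $a(r) = (F_{6r+2}-1)/4$ and $b(r) = (F_{6r+8} - 13F_{6r+2} - 32r - 8)/32$ and placing everything over $32$ gives $a(r)+b(r)+r = (F_{6r+8} - 5F_{6r+2} - 16)/32$, so the claim is equivalent to $8(F_{6r+3}-2) = F_{6r+8} - 5F_{6r+2} - 16$, i.e. to
$$
F_{6r+8} = 8F_{6r+3} + 5F_{6r+2}.
$$
This is the addition formula $F_{a+b} = F_a F_{b+1} + F_{a-1} F_b$ with $a = 6r+3$ and $b = 5$, since $F_6 = 8$ and $F_5 = 5$; alternatively it follows from Binet's formula (\ref{binet}) together with the fact that $x^2 - x - 1$ divides $x^6 - 8x - 5$ (equivalently $\phi^6 = 8\phi + 5$ and $\psi^6 = 8\psi + 5$).

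The computations are routine; the only point needing care is the index bookkeeping — translating the string $(100)^{2r}$ into the correct multiset of Fibonacci indices and keeping the offsets straight — since an off-by-one there would break the final identity. In the spirit of the other lemmas in this section one could instead obtain $n(r)$ from a linear representation of the identity function in Fibonacci base: the matrix product corresponding to the block $100\,100$ has dominant eigenvalue $\phi^6$, so $n(r) = \alpha\phi^{6r} + \beta\psi^{6r} + \gamma$, and $\alpha,\beta,\gamma$ can be fitted from small values. However, the telescoping argument above is shorter and avoids invoking that machinery a second time. Either way, the evenness of $n(r)$ together with the reduction to $F_{6r+8} = 8F_{6r+3} + 5F_{6r+2}$ completes the proof.
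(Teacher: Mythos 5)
Your proof is correct, and it takes a genuinely different route from the paper's. The paper proves the equivalent statement $n(r) = 2\bigl(a(r)+b(r)+r\bigr)$ by induction on $r$, using the recursion $n(r+1) = n(r) + F_{6r+4} + F_{6r+7}$ together with the identity $F_{6r+14} - 6F_{6r+8} - 16F_{6r+7} - 16F_{6r+4} + 5F_{6r+2} = 0$ from Lemma \ref{fibpoly}(ii), which was set up precisely for this step; this mirrors the inductive structure of Lemma \ref{fib1}. You instead extract a closed form $n(r) = (F_{6r+3}-2)/2$ directly from the representation by telescoping $\sum_{k=1}^{2r} F_{3k+1} = \frac12\sum_{k=1}^{2r}(F_{3k+3}-F_{3k})$, handle the floor by observing $F_{6r+3} \equiv 2 \pmod 4$ (period $6$ of the Fibonacci sequence mod $4$), and reduce the claim to the single standard identity $F_{6r+8} = 8F_{6r+3} + 5F_{6r+2}$, an instance of $F_{a+b} = F_aF_{b+1} + F_{a-1}F_b$. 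Your index bookkeeping checks out (the $1$'s sit in positions $3k-1$ for $1 \le k \le 2r$, contributing $F_{3k+1}$, consistent with the paper's own increment $F_{6r+4}+F_{6r+7}$, and $n(1) = F_4 + F_7 = 16$ confirms the base case). What your approach buys is independence from Lemma \ref{fibpoly}(ii) and an explicit closed form for $n(r)$ as a byproduct; what the paper's buys is uniformity with the proof of Lemma \ref{fib1} and reuse of the already-established polynomial identities.
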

\begin{proof}
We first note that $a(r) + b(r) + r =  (F_{6r+8} - 5F_{6r+2} - 16)/32$.  We use induction to show that, $n(r) = 2a(r) + 2b(r) + 2r$. The inductive step is:
\begin{align*}
n(r+1) &= (100 100)^{r+1} = n(r) + F_{6r+4} + F_{6r+7} \\
           & = 2a(r) + 2b(r) + 2r + F_{6r+4} + F_{6r+7} \\
           & = (F_{6r+8} + 16F_{6r+7} +16F_{6r+4} - 5F_{6r+2} - 16)/16 \\
           & = (F_{6r+8} + F_{6r+14} - 6F_{6r+8} - 16)/16 \\
           & = (F_{6r+14} - 5F_{6r+8} - 16)/16 \\
           & = 2a(r+1) + 2b(r+1) + 2(r+1) 
\end{align*}
where we have used lemma \ref{fibpoly} and the definition of $a(r)$ and $b(r)$. This completes the induction.
\end{proof}

\bigskip

\begin{figure}[htbp]
   \begin{center}
    \includegraphics[width=6in]{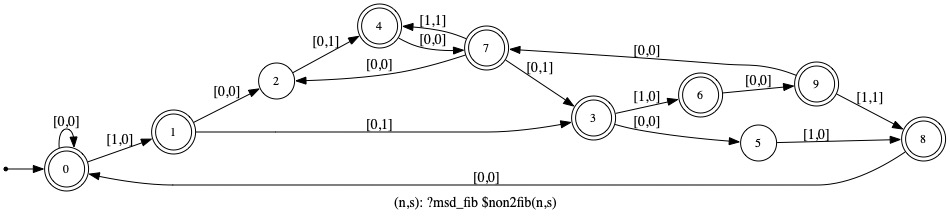}
    \end{center}
    \caption{Fibonacci-synchronising automaton for the function $\lfloor n/2 \rfloor$.}
    \label{non2fib}
\end{figure}

\bigskip

\begin{corollary}
The running sum of the Fibonacci-Thue-Morse sequence is not Fibonacci-synchronised.
\end{corollary}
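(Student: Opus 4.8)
The plan is to follow the template of Corollaries \ref{pdsumcor} and \ref{pfsumcor}: combine the two preceding lemmas to build an auxiliary Fibonacci-synchronised sequence whose growth rate contradicts Theorem \ref{shallit2}. First I would subtract Lemma \ref{fib1} from Lemma \ref{fib2}: for $n = n(r)$ with Fibonacci representation $(100100)^r$ we get $\lfloor n/2 \rfloor - \mathbf{sum_{ftm}}(n) = (a(r)+b(r)+r) - (a(r)+b(r)) = r \geq 0$, so on this set of indices the monus $\lfloor n/2 \rfloor \dotdiv \mathbf{sum_{ftm}}(n)$ is exactly $r$.

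Next I would define
$$
g(n) =
\begin{cases}
\lfloor n/2 \rfloor \dotdiv \mathbf{sum_{ftm}}(n), & \text{if the Fibonacci representation of } n \text{ lies in } (100100)^* \\
0, & \text{otherwise,}
\end{cases}
$$
and argue that, if $\mathbf{sum_{ftm}}$ were Fibonacci-synchronised, then so would be $g$. The ingredients are: $\lfloor n/2 \rfloor$ is Fibonacci-synchronised (the automaton of Figure \ref{non2fib}, or Theorem \ref{shallit}(d)); the monus of two Fibonacci-synchronised sequences is Fibonacci-synchronised by Theorem \ref{shallit}(b), which applies in the Fibonacci setting by the Remark following Theorem \ref{shallit2}; and the set of $n$ whose Fibonacci representation lies in $(100100)^*$ is accepted by an automaton, so zeroing $g$ outside that set preserves synchronisation, as noted in Section \ref{bg}.

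Finally I would estimate the size of $g$. On $n = n(r)$ we have $g(n) = r$, while by Lemma \ref{fib2} and Binet's formula $n = n(r) = 2a(r)+2b(r)+2r$ is of order $F_{6r+8} = \Theta(\phi^{6r})$, whence $r = \Theta(\log n)$. Thus $g$ is unbounded but $g(n) = \mathcal{O}(\log n) = \smallO(n)$, which contradicts part (b) of Theorem \ref{shallit2} with $\beta = 1$ (valid for Fibonacci-synchronised sequences by the Remark). Hence $\mathbf{sum_{ftm}}$ is not Fibonacci-synchronised. The only real obstacle is the middle step — checking that the difference of the two lemmas is genuinely non-negative so that the monus is clean, and stringing together the closure properties so that $g$ inherits synchronisation; once that is in place the growth estimate is immediate.
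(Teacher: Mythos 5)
Your proposal is correct and follows essentially the same route as the paper: the same auxiliary function $g$ supported on the indices with Fibonacci representation $(100100)^r$, the same appeal to Lemmas \ref{fib1} and \ref{fib2} to get $g(n(r)) = r$, and the same contradiction with Theorem \ref{shallit2} via the $\mathcal{O}(\log n) = \smallO(n)$ growth. Your extra checks (non-negativity of the difference so the monus is clean, and the Binet-formula estimate $r = \Theta(\log n)$) are details the paper leaves implicit but are entirely consistent with its argument.
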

\begin{proof}
The sequence $\lfloor n/2 \rfloor$ is Fibonacci-synchronised. A synchronising automaton, taken from the paper \cite{Cloitre:2023aa}, is shown at figure~\ref{non2fib}. Define the function $g$ by
$$
g(n) =
\begin{cases}
      \lfloor n/2 \rfloor \dotdiv \mathbf{sum_{ftm}}(n), & \text{if } \, \, n \,\,  \text{has Fibonacci representation } \,\, (100100)^r\\
    0 & \text{otherwise}
\end{cases}
$$
If $\mathbf{sum_{ftm}}$$(n)$ is Fibonacci-synchronised, then so is $ \lfloor n/2 \rfloor \dotdiv \mathbf{sum_{ftm}}(n)$ by theorem~\ref{shallit}, which also applies to Fibonacci representations. Hence, $g$ is also Fibonacci-synchronised since the set of numbers having Fibonacci representation of the form $(100 100)^r$ is defined by a regular expression. If the Fibonacci-representation of $n$ has the form $(100 100)^r$, then $g(n) = r$ by lemmas \ref{fib1} and \ref{fib2}.  Since $g(n) = \mathcal{O}(\log n) = \smallO(n)$ and is unbounded, it cannot be a Fibonacci-synchronised function by theorem~\ref{shallit2}. Hence, $\mathbf{sum_{ftm}}$ is not Fibonacci-synchronised.
\end{proof}

\bigskip

\bibliographystyle{plain}
\begin{small}
\bibliography{Synch}

\begin{thebibliography}{1}

\bibitem{oeis}
OEIS Foundation~Inc. (2024).
\newblock The {O}n-{L}ine {E}ncyclopedia of {I}nteger {S}equences.

\bibitem{Burns:2021aa}
Rob Burns.
\newblock Factorials and legendre's three-square theorem.
\newblock 01 2021.

\bibitem{Burns:2022aa}
Rob Burns.
\newblock Factorials and legendre's three-square theorem: Ii.
\newblock 03 2022.

\bibitem{Carpi_2001}
Arturo Carpi and Cristiano Maggi.
\newblock On synchronized sequences and their separators.
\newblock {\em RAIRO - Theoretical Informatics and Applications}, 35(6):513 --
  524, November 2001.

\bibitem{Cloitre:2023aa}
Benoit Cloitre and Jeffrey Shallit.
\newblock Some fibonacci-related sequences.
\newblock 12 2023.

\bibitem{Mousavi:2016aa}
Hamoon Mousavi.
\newblock Automatic theorem proving in {{\tt Walnut}}.
\newblock {\em arXiv}, 2016.

\bibitem{Shallit_2021}
Jeffrey Shallit.
\newblock Synchronized sequences.
\newblock In T.~Lecroq and S.~Puzynina, editors, {\em Combinatorics on Words},
  pages 1--19. Springer International Publishing, 2021.

\bibitem{Shallit_2022}
Jeffrey Shallit.
\newblock {\em The Logical Approach to Automatic Sequences}.
\newblock Cambridge University Press, Sep 2022.

\bibitem{sagemath}
{The Sage Developers}.
\newblock {\em {S}ageMath, the {S}age {M}athematics {S}oftware {S}ystem
  ({V}ersion 8.2)}, 2018.
\newblock {\tt https://www.sagemath.org}.

\end{thebibliography}
\end{small}

\end{document}